\documentclass[11pt]{amsart}

\usepackage{amsmath,amssymb,amsthm,mathtools}
\usepackage{hyperref}
\usepackage{aliascnt,cleveref,comment,xcolor}
\usepackage[margin=34mm]{geometry}
\usepackage[utf8]{inputenc}
\usepackage[T1]{fontenc}

\numberwithin{equation}{section}

\newtheorem{theorem}{Theorem}[section]
\newaliascnt{proposition}{theorem}
\newtheorem{proposition}[proposition]{Proposition}
\aliascntresetthe{proposition}
\newaliascnt{lemma}{theorem}
\newtheorem{lemma}[lemma]{Lemma}
\aliascntresetthe{lemma}
\newaliascnt{remark}{theorem}
\theoremstyle{definition} 

\aliascntresetthe{remark}
\newaliascnt{definition}{theorem}
\newtheorem{definition}[definition]{Definition}
\theoremstyle{plain} 

\aliascntresetthe{definition}
\newcommand{\R}{\mathbb{R}}
\newcommand{\Z}{\mathbb{Z}}
\newcommand{\N}{\mathbb{N}}
\newcommand{\T}{\mathbb{T}}

\newcommand{\A}{\mathcal{A}}
\newcommand{\W}{\mathbb{W}}

\newcommand{\eps}{\varepsilon}

\renewcommand{\hat}{\widehat}

\newcommand{\les}{\lesssim}
\newcommand{\less}{\lessapprox}

\newcommand{\maxx}[2]{
	\max_{\substack{#1 \\ #2}}
}

\definecolor{darkred}{rgb}{0.5, 0.0, 0.1}

\definecolor{problemred}{rgb}{0.8,0,0}

\DeclareMathOperator{\supp}{supp}

\title{A Mizohata--Takeuchi-type estimate for the hyperbolic paraboloid}

\author{Jongchon Kim}
\address{Department of Mathematics\\ City University of Hong Kong\\ Hong Kong SAR}
\email{jongckim@cityu.edu.hk}

\author{Liang Wang}
\address{Department of Mathematics\\ City University of Hong Kong\\Hong Kong SAR}
\email{L.Wang@cityu.edu.hk}

\keywords{Fourier restriction, weighted extension estimates, Mizohata--Takeuchi conjecture}
\subjclass[2020]{Primary 42B10; Secondary 42B37}
\thanks{This work was supported by a grant from the Research Grants Council of the Hong Kong Administrative Region, China (Project No. CityU 11308924). }

\begin{document}
\begin{abstract}
    We prove Mizohata--Takeuchi-type estimates for the Fourier extension operator associated with the truncated hyperbolic paraboloid in $\R^3$. Our main result controls the weighted $L^2$ norm of the extension operator by the $L^2$ mass of the weight over a multiscale family of parallelepipeds arising from $\ell^2$ decoupling for the hyperbolic paraboloid. We deduce this result from a weighted $L^2$ estimate for functions whose Fourier transforms are supported in a small neighborhood of the hyperbolic paraboloid, which we establish using a bilinear refined decoupling estimate.
\end{abstract}
\maketitle

\section{Introduction}
For a function \(g\) on \([-1,1]^2\), let $E$ denote the Fourier extension operator associated with the truncated hyperbolic paraboloid 
$$H=\{ (\xi,\eta,s): (\xi,\eta)\in [-1,1]^2,\, s=\xi\eta\}$$ 
defined by
\[
    Eg(x,t)
    :=
    \int_{[-1,1]^2}
    e^{2\pi i(x_1\xi+x_2\eta+t\xi\eta)}
    g(\xi,\eta)\,d\xi\,d\eta.
\]

Let $B_R\subset\R^3$ denote the ball of radius $R$ centered at the origin, and let $X$ denote the $X$-ray transform, defined by $Xw(\ell) = \int_\ell w$ for each line $\ell\subset \R^3$. We prove Mizohata--Takeuchi-type estimates for the hyperbolic paraboloid.
\begin{theorem}\label{thm:MT0}
For every \(\eps>0\), there exists \(C_\eps>0\) such that
\[
 \int_{B_R}|Eg|^2w
 \leq
 C_\eps R^{1/2+\eps}
 \| Xw \|_\infty \|g\|_2^2
\]
for every \(R\geq1\), every nonnegative weight \(w\), and every
\(g\in L^2([-1,1]^2)\).
\end{theorem}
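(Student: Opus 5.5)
The plan is to follow the abstract: pass to a weighted $L^2$ estimate for functions Fourier supported in an $R^{-1}$-neighborhood of $H$, prove that estimate with a bilinear refined decoupling inequality, and finally bound the multiscale parallelepiped masses by $\|Xw\|_\infty$.

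\emph{Step 1 (reduction).} Fix $R$ and a smooth bump $\psi$ adapted to $B_R$ whose Fourier transform is supported in $B(0,R^{-1})$. Set $F=\psi\, Eg$. Then $\hat F$ is supported in $N_{R^{-1}}(H)$, and $\|F\|_{L^2(\R^3)}^2\les R\|g\|_2^2$. It therefore suffices to prove
\[
\int |F|^2 w \le C_\eps R^{-1/2+\eps}\,\|Xw\|_\infty\,\|F\|_2^2 .
\]
For this I would use the main weighted result of the paper: the right-hand side is $\|F\|_2^2$ times a supremum of $|P|^{-1}\!\int_P w$-type averages. The supremum runs over the multiscale family of parallelepipeds $P$ coming from $\ell^2$ decoupling for $H$. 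This family includes the planks dual to $\sigma\times\tau$ rectangles, of dimensions roughly $a\times b\times ab$ with $a,b\in[R^{-1/2},1]$ dyadic, together with the degenerate tubes adapted to the null lines of $H$.

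\emph{Step 2 (proof of the weighted estimate).} Decompose $F=\sum_\theta F_\theta$ into wave packets over caps $\theta$. Run a broad/narrow argument. In the narrow case, $F$ is concentrated on thin strips near the lines $\xi=c$ or $\eta=c$. These are the ruling directions of $H$, along which the surface is flat. Here I would rescale with the anisotropic Lorentz-type rescaling of $H$ and induct on scales. In the broad case, apply bilinear refined decoupling for transverse pieces. This gives
\[
\|F_1F_2\|_{L^1(w)}\ \text{controlled by}\ \Big(\sum_{T}\|F_{1,T}\|_2^2\Big)^{1/2}\Big(\sum_{T}\|F_{2,T}\|_2^2\Big)^{1/2}\ \text{times}\ \sup_P \frac{w(P)}{|P|}\ \text{factors}.
\]
The refined version, with wave packets pigeonholed into uniform multiplicity, produces precisely the parallelepiped averages of $w$.

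\emph{Step 3 (from parallelepipeds to the X-ray transform).} Each parallelepiped $P$ in the family has a longest side of length $L$ and cross-section $A$. Covering $P$ by parallel lines in the long direction gives $\int_P w\le |A|\,\|Xw\|_\infty$, so $|P|^{-1}\int_P w \le L^{-1}\|Xw\|_\infty$. In the worst case the planks are $R^{1/2}\times R^{1/2}\times R$ and $L\sim R$. Combined with the normalization from Step 1, this yields the factor $R^{1/2}$. I will also check that the degenerate directions, namely null lines of $H$ carrying long tubes of length $R$ but width only $1$, lose no more than $R^{1/2}$.

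The main obstacle is Step 2, namely the narrow/degenerate contribution near the straight lines of $H$, where curvature vanishes. I would try first to bound that contribution by a one-dimensional parabola-type Mizohata--Takeuchi bound in the transverse variable together with trivial $L^2$ orthogonality along the ruling. I would keep the loss at $R^{\eps}$ via induction on scales.
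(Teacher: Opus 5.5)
Step 1 is fine. The gap is in the form of the weighted estimate that Step 3 consumes. You take its right-hand side to be $\|F\|_2^2$ times a supremum of $L^1$ averages $|P|^{-1}\int_P w$, but the Step 2 argument cannot give this. Refined decoupling controls $\int_X|f_1f_2|^2$, so to use it in the broad case you must first apply Cauchy--Schwarz against the weight: $\int_X|f_1f_2|w\le(\int_Xw^2)^{1/2}(\int_X|f_1f_2|^2)^{1/2}$. The incidence count then produces $\sum_Q M_j\int_Qw^2$, that is, the $L^2$ average $(|T|^{-1}\int_Tw^2)^{1/2}$.

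For the correct family the $L^1$ version is false. That family consists of the duals $T_\theta$ of the caps $\delta_1\times\delta_2\times R^{-1}$ with $\delta_1\delta_2=R^{-1}$, and all of them have volume $\sim R^2$. Take $\widehat F=1_{N_{R^{-1}}(H)}$ and $w=1_{B(0,1)}$. Then $\int|F|^2w\gtrsim R^{-2}$ and $\|F\|_2^2\sim R^{-1}$, while $|T|^{-1}\int_Tw\lesssim R^{-2}$, so the $L^1$ bound fails by a factor of $R$.

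Your own arithmetic in Step 3 exposes the same inconsistency. The bound $|P|^{-1}\int_Pw\le L^{-1}\|Xw\|_\infty$ with $L\sim R$, combined with $\|F\|_2^2\lesssim R\|g\|_2^2$, yields $R^{\eps}$, not $R^{1/2+\eps}$. That would be Mizohata--Takeuchi for $H$ with subpolynomial loss, which is open and beyond Guth's decoupling barrier. So your factor $R^{1/2}$ is asserted rather than derived.

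What is actually available is
$\int|F|^2w\lesssim_\eps R^\eps\max_{\theta:\,F_\theta\neq0}\sup_{T\parallel T_\theta}(|T|^{-1}\int_Tw^2)^{1/2}\|F\|_2^2$.
Passing from this to $\|Xw\|_\infty$ needs two ingredients missing from your plan.

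\emph{Covering by unit tubes.} If $d\in\theta$, then $T_\theta\subset 2T_{\theta(d,\delta)}$, so each $T\parallel T_\theta$ is covered by $O(R)$ unit tubes parallel to $n(d)$. Hence $(|T|^{-1}\int_Tw^2)^{1/2}\lesssim R^{-1}\cdot R^{1/2}\sup_{\ell}(\int_{N_1(\ell)}w^2)^{1/2}$. The $R^{1/2}$ is the square root of the number of tubes, and multiplying by $\|F\|_2^2\sim R\|g\|_2^2$ gives the stated power.

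\emph{Smoothing the weight.} For a rough weight, $\int_{N_1(\ell)}w^2$ is not bounded by $Xw(\ell)^2$. You must first replace $w$ by $\widetilde w=w*\phi$ with $\phi=(1+|\cdot|)^{-100}$. This is legitimate because $\widehat F$ has compact support, so $|F|^2\lesssim|F|^2*\phi$. For the locally constant $\widetilde w$ one then has $(\int_{N_1(\ell)}\widetilde w^2)^{1/2}\lesssim\int_{N_1(\ell)}\widetilde w\lesssim\|Xw\|_\infty$.

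On the narrow case you flag as the main obstacle, no one-dimensional Mizohata--Takeuchi bound is needed. The map $\mathcal A_{c,\delta}$ sends the $(K^{-1}R)^{-1}$-neighbourhood over the full square onto the $R^{-1}$-neighbourhood over a $1\times K^{-1}$ strip, and similarly $K^{-2}R$ for a $K^{-2}$-square. The family $\{\delta_1\delta_2=R^{-1},\ R^{-1}\le\delta_j\le1\}$ is closed under these maps. It contains the $R^{-1}\times1$ strips, not only rectangles with your $a,b\ge R^{-1/2}$, so the induction on scales closes directly.
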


Weighted estimates for Fourier extension operators provide information not only about the size of the level sets of $Eg$, but also about their spatial concentration. To place this result in context, let $\Sigma\subset\R^n$ be a compact $C^2$ hypersurface equipped with surface measure $d\sigma$. Weighted estimates of the form 
\[   \int_{B_R}
    \bigl|\widehat{gd\sigma}\bigr|^2 w
    \leq C R^\alpha \| Xw \|_\infty 
    \|g\|_{L^2(d\sigma)}^2\]
are referred to as Mizohata--Takeuchi-type estimates. 
The Mizohata--Takeuchi conjecture (see \cite{BaBeCa08} and references therein) asserted that such an estimate should hold with $\alpha=0$, or equivalently, that the corresponding global estimate over $\R^n$ should hold. 
When $\Sigma$ is contained in an affine hyperplane, the conjectured estimate holds for arbitrary nonnegative weights by Plancherel's theorem. It also holds for the sphere under the additional assumption that the weight is radial \cite{BaRuVe97,CaSoVa07}. However, Cairo \cite{Cai25} recently disproved the conjecture for every nonflat $C^2$ hypersurface. When $\Sigma$ is strictly convex and has nonvanishing Gaussian curvature, Carbery, Iliopoulou, and Wang \cite{CaIlWa} established an estimate corresponding to \Cref{thm:MT0}; see the discussion following \Cref{thm:MT} below. We refer to \cite{CaIlWa} and the references therein for further positive results.  

The study of Fourier extension operators originates in the Fourier restriction problem. Motivated by an observation of Stein in the 1960s, 
the restriction problem concerns the Lebesgue space mapping properties of the Fourier restriction operator $f\longmapsto \hat{f}|_\Sigma$. In particular, a restriction estimate shows that, for certain $1<p<2$, the Fourier transform of an $L^p$ function can be meaningfully restricted to the measure-zero set $\Sigma$.
By duality, Fourier restriction estimates for a hypersurface $\Sigma$ as above may be formulated as $L^p(\Sigma,d\sigma)\to L^q(\R^n)$ estimates for the corresponding Fourier extension operator
\[ g \longmapsto \widehat{g d\sigma}. \]
When $\Sigma = H$, this operator can be expressed in terms of $E$ up to multiplication of $g$ by a harmless factor. Consequently, the two operators have equivalent Lebesgue space mapping properties. See \cite{ChoLee17, DW} and the references therein for recent progress on the Fourier restriction problem for $H$. 

We now formulate a refined version of \Cref{thm:MT0}. We first introduce a family of parallelepipeds, arising from $\ell^2$-decoupling inequalities (see \cite{GuMaOh} and \cite{DW}), which covers the (vertical) $R^{-1}$ neighborhood of $H$:
$$N_{R^{-1}}(H) = \bigl\{ (\xi, \eta, s) \in \R^{3}: (\xi,\eta) \in [-1,1]^{2},\ |s-\xi\eta| \leq R^{-1} \bigr\}.$$ 
Let $R\in 4^{\N}$. For each pair $\delta=(\delta_1,\delta_2)$ of dyadic numbers satisfying $$R^{-1}\leq \delta_1,\delta_2\leq 1 \quad\text{and}\quad \delta_1\delta_2=R^{-1},$$ we partition the square $[-1,1]^2$ into a collection $\Theta_\delta$ of axis-parallel rectangles of dimensions $\delta_1\times\delta_2$.
We denote the union of these $O(\log R)$ collections by $$\Theta(R^{-1}) = \bigcup_{\delta_1\delta_2=R^{-1}} \Theta_\delta.$$
More generally, if $4^k\leq R<4^{k+1}$ for some integer $k\geq0$, we replace the condition $\delta_1\delta_2=R^{-1}$ with $\delta_1\delta_2=4^{-k}$.

For each $\theta \in \Theta(R^{-1})$, there exists a parallelepiped $\bar{\theta}$ of dimensions $\delta_1\times \delta_2 \times R^{-1}$ that is tangent to $H$ and contains
\[ N_{R^{-1}}(H_\theta) := N_{R^{-1}}(H) \cap (\theta\times \R).\] 
Let $T_\theta$ denote the parallelepiped dual to $\bar{\theta}$ of dimensions $\delta_1^{-1}\times \delta_2^{-1}\times R$ centered at the origin. A precise description of $\bar{\theta}$ and $T_\theta$ is given in \Cref{sec:wavePacket}.

In what follows, we write $T \parallel T_\theta$ if $T$ is a translate of $T_\theta$. For $\Omega\subset[-1,1]^2$ and a line $\ell$, we write $\ell\perp\Omega$ if there exists $(\xi,\eta)\in\Omega$ such that $\ell$ is parallel to the vector 
\[ n(\xi,\eta) = (-\eta,-\xi, 1) \]
normal to the hyperbolic paraboloid at the point $(\xi,\eta,\xi\eta)$. Let $g_\theta := g1_{\theta}$.  
We have the following refined version of \Cref{thm:MT0}.

\begin{theorem}\label{thm:MT}
For every \(\eps>0\), there exists \(C_\eps>0\) such that
\begin{equation}\label{eq:extension-MTthm}
    \int_{B_R}|Eg|^2 w
    \leq
    C_\eps R^\eps
    \max_{\substack{
        \theta\in\Theta(R^{-1})\\
        g_\theta\neq0}}
    \sup_{T\parallel T_\theta}
    \left(
        \int_Tw^2
    \right)^{1/2}
    \|g\|_2^2
\end{equation}
for every \(R\geq1\), every nonnegative weight \(w\), and every
\(g\in L^2([-1,1]^2)\). In particular, 
\begin{equation}\label{main:eq2thm}
 \int_{B_R}|Eg|^2w
 \leq
 C_\eps R^{1/2+\eps}
 \sup_{\ell\perp\supp g} \left(
        \int_{N_{1}(\ell)} w^2
    \right)^{1/2}
 \|g\|_2^2,
\end{equation}
where $N_{1}(\ell)$ denotes the 1-neighborhood of the line $\ell$.
\end{theorem}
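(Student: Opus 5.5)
The estimate \eqref{eq:extension-MTthm} will follow from a weighted $L^2$ estimate for functions $F$ with $\supp\hat F\subset N_{R^{-1}}(H)$, as the abstract indicates; \eqref{main:eq2thm} is then a geometric corollary.

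\emph{Reduction.} Fix a bump $\phi_R$ adapted to $B_R$ with $\hat\phi_R$ supported in $B_{R^{-1}}$, and set $F=\phi_R Eg$. Then $\hat F\subset N_{R^{-1}}(H)$, and $\int_{B_R}|Eg|^2w\lesssim\int|F|^2w$. Plancherel in $(x,t)$ on slabs of thickness $R$ gives $\|F\|_2^2\sim R\|g\|_2^2$. The target becomes
\[
\int |F|^2 w\le C_\eps R^{\eps-1}\max_{\theta}\sup_{T\parallel T_\theta}\Big(\int_T w^2\Big)^{1/2}\|F\|_2^2 .
\]
Here $\theta$ runs over $\Theta(R^{-1})$ with $F_\theta\neq0$. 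We may normalize $\sup\|w\|_{L^2(T)}=1$ and dyadically pigeonhole $w$ into level sets $w\sim\lambda$. By $L^2$ duality it suffices to bound $\int_{\Omega}|F|^2$ for sets $\Omega$ with $|\Omega\cap T|\le \lambda^{-2}$ for every $T$, each such set costing a factor $\lambda$.

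\emph{Main step.} I would perform a broad/narrow (bilinear) decomposition at an intermediate scale $K$.

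\textbf{Narrow part.} This is where $F$ concentrates on a strip in which one of $\xi,\eta$ is nearly fixed, i.e. near a line in $H$. Use parabolic rescaling adapted to the hyperbolic paraboloid, which maps rectangles in $\Theta_\delta$ to rectangles in the coarser family. Then induct on scales: the tubes $T_\theta$ rescale to tubes of the same family, so the hypothesis on $w$ is preserved.

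\textbf{Bilinear part.} Use a bilinear refined decoupling estimate for $H$, i.e. a refined Strichartz/decoupling of Guth--Iosevich--Ou--Wang type. Decompose $F=\sum_{\theta,T}F_{\theta,T}$ into wave packets, with $\theta\in\Theta(R^{-1})$ at the right scale $\delta$ and $T\parallel T_\theta$. Pigeonhole the packets to uniform amplitude, and bound $|\Omega\cap\bigcup T|$ for the relevant tubes. The refined estimate should give, on a region where each point lies in at most $M$ packets,
\[
\int_\Omega|F|^2\lesssim R^{\eps}\,\frac{M}{\#\text{packets}}\cdots .
\]
Combining this with $|\Omega\cap T|\le\lambda^{-2}$ via Cauchy--Schwarz, $\int_{\Omega\cap T}|F_{\theta,T}|^2\le \|F_{\theta,T}\|_4^2|\Omega\cap T|^{1/2}$, yields the factor $\lambda^{-1}\|F_{\theta,T}\|_\infty^2$-type bounds. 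Since $\|F_{\theta,T}\|_\infty^2|T|\sim\|F_{\theta,T}\|_2^2$ and $|T|=R^2$, the power $R^{-1}$ appears after using $|T|^{1/2}=R$.

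The main obstacle is the bilinear decoupling step. For the hyperbolic paraboloid, transversality does not prevent concentration along the null lines of $H$. The multiscale family $\Theta(R^{-1})$, with all aspect ratios $\delta_1\delta_2=R^{-1}$, is exactly what is needed here, and the broad/narrow dichotomy must be arranged so that every narrow configuration is absorbed by some $\Theta_\delta$.

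\emph{Deducing \eqref{main:eq2thm}.} Each $T\parallel T_\theta$ has dimensions $\delta_1^{-1}\times\delta_2^{-1}\times R$ with $\delta_1^{-1}\delta_2^{-1}=R$. Its long axis is along the normal $n(\xi,\eta)$ for some $(\xi,\eta)\in\theta$. Cover $T$ by $\sim R$ parallel $1$-neighborhoods $N_1(\ell)$ of lines $\ell\perp\supp g$, noting that $\theta\cap\supp g\neq\emptyset$. This gives
\[
\int_T w^2\le R\sup_{\ell\perp\supp g}\int_{N_1(\ell)}w^2 .
\]
Taking square roots produces the factor $R^{1/2}$.
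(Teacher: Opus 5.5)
Your architecture matches the paper's. You pass to $F$ with $\supp\widehat F\subset N_{R^{-1}}(H)$ and use a broad--narrow split with induction on scales through the maps $\A_\tau$ (which carry $T_{\theta'}$ to $T_\theta$, \Cref{lem:composition}, so the tube condition on $w$ survives rescaling). You treat the broad part with the Demeter--Wu bilinear refined decoupling and cover each $T$ by $O(R)$ unit tubes.

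\textbf{The gap is the broad step.} You flag it as the main obstacle and then leave it as a heuristic, and as written it proves nothing. The displayed bound for $\int_\Omega|F|^2$ is not an available estimate: refined decoupling gains nothing at $p=2$, and here it is used in the bilinear $L^4$ form $\int_X|f_1f_2|^2$ for pieces transverse in both coordinates (the linear GIOW analogue fails for $H$). The per-packet inequality $\int_{\Omega\cap T}|F_{\theta,T}|^2\le\|F_{\theta,T}\|_4^2|\Omega\cap T|^{1/2}$ gives no control of $\int_\Omega|\sum F_{\theta,T}|^2$, since the packets are not orthogonal on $\Omega$.

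What is missing is the mechanism that cancels the multiplicity against the hypothesis on $w$. In the paper, one first localizes to $R$-balls and pigeonholes packets to $\|f_T\|_2\sim\nu_j$. One then pigeonholes the $R^{1/2}$-balls $Q$ so that $M(Q,\W_j)\sim M_j$ for both $j$. A two-sided bound is essential: \Cref{lem:bilinear-refined-decoupling} involves $\max_Q M(Q,\W_j)$, while the incidence step needs that size on every $Q$, so your ``at most $M$'' does not suffice. Then one writes
\[
\int_X|f_1f_2|w\le\Big(\sum_{Q}w^2(Q)\Big)^{1/2}\Big(\int_X|f_1f_2|^2\Big)^{1/2},
\]
applies \eqref{eq:bilinear-refined-decoupling}, and uses the double counting
\[
M_j\sum_Q w^2(Q)\sim\sum_{T\in\W_j}\sum_{Q\cap 2T\neq\emptyset}w^2(Q)\lessapprox\sum_{T\in\W_j}|T|\,\|w\|_{L^2_{\mathrm{avg}}(T)}^2,
\]
together with Bernstein's inequality $\|f_T\|_4^4\lesssim|T|^{-1}\|f_T\|_2^4$. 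In your level-set language, the needed input is $M_j|\Omega\cap X|\lessapprox\#\W_j\,\lambda^{-2}$, followed by Cauchy--Schwarz on $X$ rather than on individual tubes.

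\textbf{You also misplace the role of the null lines and of the multiscale family.} \Cref{lem:bilinear-refined-decoupling} uses only square caps. Concentration along lines of $H$ is handled by the narrow strip terms $f_S$, $S\in\Theta_{\mathrm{strip}}(K^{-1})$. Your narrow case must also contain single-square terms $f_\tau$, $\tau\in\Theta_{\mathrm{square}}(K^{-2})$, which cannot be dominated by strip terms because of cancellation. In the paper, anisotropic caps enter the broad term only because the broad pairs are merely $K^{-1}$-transverse. One rescales by $\A_\tau$, with $\tau=\theta(c(\tau_1),\delta)$ and $\delta_j\sim|c_j(\tau_2)-c_j(\tau_1)|$, to obtain $1/4$-transverse $\Delta$-rectangles at scale $R_0=\delta_1\delta_2R$. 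This requires $K\le R^{1/3}$ so that $\Delta_j\ge R_0^{-1/2}$. Square caps at scale $R_0$ then pull back to non-square $\theta\in\Theta(R^{-1})$.

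\textbf{Two smaller points.}
\begin{itemize}
\item A full three-dimensional bump $\phi_R$ gives only $\|F\|_2^2\lesssim R\|g\|_2^2$. It also smears $\supp\widehat F$ horizontally, so $F_\theta\neq0$ for $\theta$ merely adjacent to $\supp g$ (and beyond $[-1,1]^2$). Both issues are fixable, but the vertical cutoff $\widehat F=R\psi(R(\tau-\xi\eta))g$ avoids them.
\item $T_\theta$ has its long axis along $n(c(\theta))$, not along $n(d)$ for $d\in\theta\cap\supp g$. You need \Cref{lem:inclusion}, $T_{\theta(c,\delta)}\subset 2T_{\theta(d,\delta)}$, before covering by tubes around lines $\ell\perp\supp g$.
\end{itemize}
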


If $g_\theta$ is not identically $0$, then there exists $d\in \theta \cap \supp g$ such that $T_\theta$ is covered by $O(R)$ tubes of dimensions $1\times 1\times R$ whose long axes are parallel to $n(d)$ (see \Cref{prop:lPerp} below). Thus,  \eqref{main:eq2thm} is a consequence of \eqref{eq:extension-MTthm}. 
For nonnegative weights roughly constant at scale 1, in the sense that $w(x)\sim w(y)$ whenever $|x-y|\leq 1$, \eqref{main:eq2thm} yields \Cref{thm:MT0} as 
\[ \sup_{\ell} \int_{N_{1}(\ell)} w^2 \les \| Xw \|_{\infty}^2. \]
We refer to \Cref{sec:proofMT0} for the proof of \Cref{thm:MT0} for general weights.

We compare \Cref{thm:MT} with an earlier result. For compact strictly convex  \(C^2\)
surfaces \(\Sigma\subset\mathbb R^n\)  with nonvanishing Gaussian curvature,  Carbery, Iliopoulou and Wang \cite[Theorem~1.2]{CaIlWa} proved that
\begin{equation}\label{eqn:CIW}
    \int_{B_R}
    \bigl|\widehat{gd\sigma}\bigr|^2w
    \leq C_\eps
    R^\varepsilon
    \sup_{T}
    \left(\int_Tw^{\frac{n+1}{2}}\right)^{\frac{2}{n+1}}
    \|g\|_{L^2(d\sigma)}^2,
\end{equation}
where \(T\) runs over tubes of dimensions
\[
    R^{1/2}\times \cdots \times R^{1/2}\times R
\]
whose long axes are parallel to normal directions over \(\supp g\).
In particular, they obtained
\begin{equation}\label{eqn:CIWpower}
    \int_{B_R}
    \bigl|\widehat{gd\sigma}\bigr|^2w
    \leq C_\eps
    R^{\frac{n-1}{n+1}+\varepsilon}
    \sup_{\ell\perp\supp g}Xw(\ell)\,
    \|g\|_{L^2(d\sigma)}^2.
\end{equation}
Thus, for the hyperbolic paraboloid in dimension \(n=3\), \Cref{thm:MT0} and \Cref{thm:MT} are analogues of \eqref{eqn:CIWpower} and \eqref{eqn:CIW}, respectively. 

Since the truncated hyperbolic paraboloid $H$ is not convex, their result does not apply to $H$. In fact, \eqref{eqn:CIW} fails to hold for $H$, which is related to the fact that $H$ contains line segments.
For \(H\), the supremum must instead be enlarged to include translates of the parallelepipeds \(\{T_\theta\}_{\theta\in\Theta(R^{-1})}\), as in \Cref{thm:MT}. Indeed, for the hyperbolic paraboloid, \eqref{eqn:CIW} would amount to the estimate \eqref{eq:extension-MTthm} with $\Theta(R^{-1})$ replaced by the subcollection $\Theta_{\text{square}}(R^{-1})$ consisting of squares of side length comparable to $R^{-1/2}$.
To see that such a stronger estimate cannot hold, one may choose 
\[ g=1_{[0,1]\times [0,R^{-1}]},\qquad w=1_{[0,c]\times[0,cR]\times[0,cR]} \]
for a small absolute constant $c>0$. In this case, $|Eg|\sim R^{-1}$ on the support of $w$, so the left-hand side of  \eqref{eq:extension-MTthm} is bounded below by an absolute constant. On the other hand, every relevant tube of dimensions $R^{1/2} \times R^{1/2} \times R$ intersects the support of $w$ on a set of volume $O(R^{3/2})$. Since $\| g\|_{L^2}^2 = R^{-1}$, the right-hand side of this hypothetical estimate is $O(R^{-1/4+\eps})$ for arbitrary $\eps>0$, which is a contradiction. 

In view of the Mizohata--Takeuchi conjecture and Cairo's counterexample \cite{Cai25}, it is natural to ask whether the factor $R^{1/2+\eps}$ in \eqref{main:eq2thm} can be improved to the subpolynomial loss $R^\eps$. In a talk \cite{Gu22}, Guth showed that certain decoupling axioms alone cannot yield an estimate without a power loss; see \cite[Section 7]{CaIlWa} for more details. 
The power losses in \eqref{eqn:CIWpower} and \eqref{main:eq2thm} match the barrier described by Guth. More recently, Cairo and Zhang \cite{CaZh25} showed that, for any given $C^k$ hypersurface in $\mathbb{R}^n$, there exists an arbitrarily small \(C^k\) perturbation for which the power loss of
\[
    R^{\frac{n-1}{n-1+k}-\varepsilon}
\]
is necessary. In particular, when \(k=2\), this matches the power loss in \eqref{eqn:CIWpower} up to $\eps$. 
The examples of Cairo and Zhang involve specially perturbed
hypersurfaces and do not yield a corresponding lower bound with a power of $R$ for the hyperbolic paraboloid. Thus, it is still open whether the power \(1/2\) in \eqref{main:eq2thm} can be improved for this particular surface.

Several variants of Mizohata--Takeuchi-type estimates have been studied. Geometric variants include weighted estimates for the truncated cone under a one-dimensional condition on the weight \cite{Ort26} and for well-curved curves via refined decoupling inequalities \cite{CaLiPaYu26}. See also \cite{BeGuNaOl25,BeBeGuNaOl25} for Sobolev and orthonormal-system variants and \cite{Shayya23,Wu25} for $L^2\to L^p(w)$ variants. Related weighted estimates for extension operators associated with hyperboloids under an $\alpha$-dimensional growth condition on the weight were obtained in \cite{BaErHa21}.

We deduce \Cref{thm:MT} from a weighted  $L^2$ estimate for functions $f$ with $\supp{\widehat{f}} \subset N_{R^{-1}}(H)$. For each $\theta\in\Theta(R^{-1})$, we define $f_\theta$ by    $\widehat{f_\theta}=\widehat{f}1_{\theta \times \R}$ and consider the wave packet decomposition
\[
    f_\theta=\sum_{T\in\mathbb T_\theta}f_T,
\]
where $\mathbb T_\theta$ is a lattice of translates of $T_\theta$
covering $\R^3$ with $O(1)$ overlap. Each $f_T$ is essentially localized to $T$
in physical space and to a fixed enlargement of $\bar\theta$
in frequency space. We also define
\[
    \|w\|_{L^2_{\mathrm{avg}}(T)}
    :=\left(\frac1{|T|}\int_{\mathbb R^3}w^2\chi_T\right)^{1/2},
\]
where $\chi_T$ is a nonnegative, $L^\infty$-normalized weight adapted to $T$. Precise definitions of the wave packets and the weights $\chi_T$ are given in \Cref{sec:proofthmweightedL2}.

\begin{theorem}\label{thm:weightedL2}
For every \(\eps>0\), there exists \(C_\eps>0\) such that, for every
\(R\geq1\), every nonnegative weight \(w\), and every \(f\) satisfying
\(\supp\widehat f\subset N_{R^{-1}}(H)\), we have
\begin{equation}\label{eqn:mainL2}
    \int_{\mathbb R^3}|f|^2w\leq C_\eps R^\eps\Bigg(\sum_{\theta\in\Theta(R^{-1})}
        \sum_{T\in\mathbb T_\theta}\|f_T\|_2^2\|w\|_{L^2_{\mathrm{avg}}(T)}^2\Bigg)^{1/2}\|f\|_2.
\end{equation}
Consequently, 
\begin{equation}\label{eqn:equivL2}
    \int_{\mathbb R^3}|f|^2w
    \leq
    C_\eps R^\eps
    \max_{\substack{
        \theta\in\Theta(R^{-1})\\
        f_\theta\neq0}}
    \sup_{T\parallel T_\theta}
    \left(
        \frac1{|T|}
        \int_Tw^2
    \right)^{1/2}
    \|f\|_{L^2}^2.
\end{equation}
\end{theorem}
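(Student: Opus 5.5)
We derive \eqref{eqn:mainL2} from a refined-decoupling inequality in $L^4$. The consequence \eqref{eqn:equivL2} is then an easy corollary. Indeed, $\|w\|_{L^2_{\mathrm{avg}}(T)}$ is bounded by the maximum in \eqref{eqn:equivL2}, up to rapidly decaying tails of $\chi_T$ which are summed over translates. Also, $\sum_\theta\sum_T\|f_T\|_2^2\lesssim \sum_\theta \|f_\theta\|_2^2$, and this is $\lesssim (\log R)\|f\|_2^2$, since the $O(\log R)$ families $\Theta_\delta$ each tile $[-1,1]^2$. So the work is in \eqref{eqn:mainL2}.

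For \eqref{eqn:mainL2}, start with Cauchy--Schwarz:
\[
\int|f|^2w\le \Big(\int |f|^4\, \tfrac{w^2}{\|f\|^2_{\ldots}}\Big)^{1/2}\ldots
\]
A cleaner approach is to prove the dual form directly. Pigeonhole dyadically on the level $|f|\sim\lambda$ and on the level of $w$, and aim for a bound $\int_{U}|f|^2 w$ with $U=\{|f|\sim\lambda\}$. The main input is a bilinear refined decoupling estimate for $H$ at $L^4$. If $f=\sum_{T\in\mathbb W}f_T$ with wave packets of comparable $\|f_T\|_\infty$, and each point of $U$ lies in at most $M$ tubes from $\mathbb W$ within each direction family, then
\[
\|f\|_{L^4(U)}^4\lessapprox M\sum_{T}\|f_T\|_4^4.
\]
For $H$ this must use all the anisotropic scales $\Theta(R^{-1})$ rather than only squares.

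I would organise the proof as follows. First, apply a broad--narrow (bilinear) reduction adapted to $H$. The ``narrow'' part is concentrated near a line segment $\{\xi=c\}$ or $\{\eta=c\}$, where $H$ degenerates to a cylinder over a parabola. This part is handled by induction on scales via parabolic rescaling $(\xi,\eta)\mapsto(\xi/\delta_1,\eta)$, which maps $\Theta(R^{-1})$ rectangles to $\Theta$ rectangles at the coarser scale. This is why the full multiscale family appears in the statement. Second, for the bilinear transverse part, I would prove the bilinear refined decoupling estimate by combining bilinear Kakeya/$L^2$ orthogonality with the Guth--Maldague--Oh style wave-packet counting. Third, to convert it into the weighted estimate, write
\[
\int |f|^2 w\le \|f\|_2\Big(\int|f|^2w^2\Big)^{1/2}
\]
This is false for the right-hand side as written, so instead expand $\int |f|^2w=\sum_{T,T'}\int f_T\overline{f_{T'}}w$. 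Using the refined estimate in the form
\[
\int_U|f|^2 w\le \Big(\int_U |f|^4\Big)^{1/4}\ldots
\]
is also awkward. The cleaner route is a bilinear $L^2$-weighted local orthogonality:
\[
\int|f_1f_2|\,w\lessapprox \Big(\sum_T\|f_T\|_2^2\|w\|^2_{L^2_{\rm avg}(T)}\Big)^{1/2}\|f\|_2.
\]
This follows from Cauchy--Schwarz, $\int|f_1f_2|w\le\|f_1f_2\|_{L^2}^{1/2}\|(f_1f_2)^{1/2}w\|_{L^2}\cdots$, together with transversality of the two caps. Transversality makes $|f_1|^2*$(tube) averages interact with $w^2$ only through the $L^2_{\rm avg}$ norms on the tubes.

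The main obstacle is the narrow/degenerate regime. Caps along a common line of $H$ are not transverse, and the counterexample in the introduction shows that square tubes are insufficient there. I would handle it by induction on $R$ with the $R^\eps$ loss. The key check is that after anisotropic rescaling, $\|w\|_{L^2_{\rm avg}(T)}$ transforms correctly to the averages over the rescaled parallelepipeds, and that the number of rescaling steps is $O(\log R)$, keeping the total loss at $R^{\eps}$.
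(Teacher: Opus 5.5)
Your outer architecture matches the paper's. The consequence \eqref{eqn:equivL2} follows from \eqref{eqn:mainL2} exactly as you say. \eqref{eqn:mainL2} itself is proved by a broad--narrow decomposition: the narrow pieces (squares of side $K^{-1}$ \emph{and} $1\times K^{-1}$ strips) are rescaled by $\A_\tau$ and handled by induction on scales, and the broad piece is handled by the Demeter--Wu bilinear refined decoupling inequality. The gap is in the one step that actually produces the right-hand side of \eqref{eqn:mainL2}. That step is the weighted bilinear bound $\int|f_{\sigma_1}f_{\sigma_2}|w\lessapprox\prod_{j}\bigl(\sum_{T\in\T_j}\|f_T\|_2^2\|w\|^2_{L^2_{\mathrm{avg}}(T)}\bigr)^{1/4}\|f_{\sigma_j}\|_2^{1/2}$ for transverse $\sigma_1,\sigma_2$ (\Cref{prop:bilinear-separated}). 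Your splitting has first factor $\|f_1f_2\|_{L^1}^{1/2}$, not $L^2$. It then leaves you needing to bound $\int|f_1f_2|\,w^2$ by $\sum_{T}\|f_T\|_2^2|T|^{-1}\int w^2\chi_T$, which is \emph{linear} in $W=w^2$. No such bound holds, transversality notwithstanding. Take $W$ the indicator of a small ball about the origin and $\widehat{f_{\sigma_j}}=1_{N_{R^{-1}}(H)\cap(\sigma_j\times\R)}$ with $\sigma_j$ of side $1/4$. Then $|f_{\sigma_1}f_{\sigma_2}|\sim R^{-2}$ on that ball. The right side, however, is $\lesssim|T|^{-1}\bigl(\|f_{\sigma_1}\|_2^2+\|f_{\sigma_2}\|_2^2\bigr)\sim R^{-3}$, so you lose a factor of $R$; this is the same example behind the sharpness of $q=2$. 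Transversality alone says nothing about how many packets pass through the region where $w$ lives. That multiplicity information is exactly what refined decoupling supplies, and your sketch never uses it.

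What is needed, and what the paper does, is to separate $w$ completely and let refined decoupling count packets. First localize to $R$-balls; packets with $2T$ missing the ball contribute $O(R^{-1000})$ by \eqref{eqn:rapDec}. Pigeonhole so that $\|f_T\|_2\sim\nu_j$ for $T\in\W_j$. Cover by disjoint $R^{1/2}$-balls $Q$ and pigeonhole so that $M(Q,\W_j)\sim M_j$ on $X=\bigcup_{Q\in\mathcal Q}Q$. Then use $\int_X|f_1f_2|w\le\bigl(\sum_{Q\in\mathcal Q}\int_Qw^2\bigr)^{1/2}\bigl(\int_X|f_1f_2|^2\bigr)^{1/2}$. \Cref{lem:bilinear-refined-decoupling} bounds the last factor by $\prod_j\bigl(M_j\sum_{T\in\W_j}\|f_T\|_4^4\bigr)^{1/4}$, and Bernstein gives $\|f_T\|_4^4\lesssim|T|^{-1}\|f_T\|_2^4$. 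The double count $M_j\sum_{Q\in\mathcal Q}\int_Qw^2\sim\sum_{T\in\W_j}\sum_{Q\cap 2T\neq\emptyset}\int_Qw^2\lessapprox\sum_{T\in\W_j}|T|\,\|w\|^2_{L^2_{\mathrm{avg}}(T)}$ turns ``mass of $w^2$ times multiplicity'' into tube averages. The uniformity $\|f_T\|_2\sim\nu_j$ is what lets $\nu_j^2\sum_T\|f_T\|_2^2$ be split between the two factors.

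Relatedly, the inequality you display as your main input, $\|f\|_{L^4(U)}^4\lessapprox M\sum_T\|f_T\|_4^4$, is a \emph{linear} refined decoupling. With square caps it is false for $H$, and you offer no multiscale substitute. The available (and sufficient) input is the bilinear square-cap estimate, with $M_j$ the number of packets, over all directions, meeting each $R^{1/2}$-ball.

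Two smaller corrections. Over a strip, $H$ is not a cylinder over a parabola; after $\A_\tau$ it is another copy of $H$, since $H$ is ruled. Also, the $R^\eps$ bound does not follow from the number of steps being $O(\log R)$. Each step costs an absolute factor $C_0$ and branches into two narrow terms, so $K$ must be large in terms of $\eps$. The paper takes $K\sim R^{\eps/12}$ and closes $C(R)\le C_0\bigl(C(K^{-2}R)+C(K^{-1}R)+K^6A_\eps R^\eps\bigr)$ using $K^\eps\ge 4C_0$.
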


Estimates analogous to  \eqref{eqn:mainL2} were established in the strictly convex and well-curved settings in \cite[Theorem 4.1]{CaIlWa} and \cite[Theorem 2.2]{CaLiPaYu26}, respectively. 
In those settings, the estimates are equivalent to refined decoupling inequalities. The formulation of \eqref{eqn:mainL2}, however, is weaker in that the sum over $\theta$ involves $O(\log R)$ distinct partitions of the support of $\widehat{f}$.  Consequently,  \eqref{eqn:mainL2} does not appear to imply a  refined decoupling inequality for the hyperbolic paraboloid.

We now explain how \eqref{eqn:equivL2} follows from \eqref{eqn:mainL2}. First, we note that if $f_\theta=0$ for some $\theta\in \Theta(R^{-1})$, then $f_T=0$ for all $T\in \T_\theta$. Thus, the sum over $\theta$ in \eqref{eqn:mainL2} may be restricted to those $\theta$ for which $f_\theta\neq 0$. Moreover, the decay of $\chi_T$ gives
\[
    \|w\|_{L^2_{\mathrm{avg}}(T)}^2
    \lesssim
    \sup_{T'\parallel T_\theta}
    \frac1{|T'|}\int_{T'} w^2
\]
for every $T\in\mathbb T_\theta$. 
Applying Plancherel's theorem for each partition $\Theta_\delta$ and recalling that there are $O(\log R)$ admissible pairs $\delta$, we obtain
\[
    \sum_{\theta\in\Theta(R^{-1})}
    \sum_{T\in\mathbb T_\theta}\|f_T\|_2^2 \sim \sum_{\theta\in\Theta(R^{-1})}
    \|f_\theta\|_2^2
    \lesssim
    \log R\|f\|_2^2.
\]
Therefore, after absorbing the logarithmic loss into $R^\eps$, \eqref{eqn:equivL2} follows from \eqref{eqn:mainL2}.

We note that \eqref{eqn:equivL2} is equivalent to \eqref{eq:extension-MTthm} in \Cref{thm:MT}; see \Cref{sec:extension-equivalence}. Estimates of the form \eqref{eqn:equivL2} were developed in connection with the Falconer distance set problem in \cite{GIOW}. See also \cite[Theorem 2.5]{Kim25} for analogous estimates for paraboloids in $\R^n$. 
The exponent $q=2$ in $\left(|T|^{-1} \int_Tw^q\right)^{1/q}$ in \eqref{eqn:equivL2} is sharp, in the sense that it cannot be replaced by any $q<2$. This can be seen by choosing $\widehat{f}$ and $w$ as indicator functions of $N_{R^{-1}}(H)$ and the unit ball $B(0,1)$, respectively. On the other hand, as observed in \cite{CaIlWa}, the Knapp example shows that the exponent $q=2$ in the factor $|T|^{-1/2} \left(\int_T w^q \right)^{1/q}$ in \eqref{eqn:equivL2} cannot be replaced by any larger exponent.

A new difficulty in establishing \eqref{eqn:mainL2}, compared with the setting of \cite{CaIlWa}, is that the direct analogue of the refined decoupling inequality for strictly convex hypersurfaces \cite{GIOW} (see also \cite{du2023weighted}) fails for the hyperbolic paraboloid. This difficulty is already present in decoupling estimates for functions $f$ satisfying $\supp\widehat{f}\subset N_{R^{-1}}(H)$.
Bourgain and Demeter \cite{BoDe17} proved the $\ell^2$-decoupling inequality
\[ \| f\|_{L^p(\R^3)} \leq C_\eps R^{\frac{1}{4}-\frac{1}{2p}+\eps} \bigg(\sum_{\theta \in \Theta_{\text{square}}(R^{-1})} \| f_\theta\|_{L^p}^2\bigg)^{1/2}, \qquad 2\leq p\leq 6.\]
Unlike in the case of the paraboloid (cf. \cite{BoDe15}), the power loss in $R$ is necessary even for $2<p\leq 4$. This is again related to the fact that $H$ contains line segments. On the other hand, Guth, Maldague and Oh \cite{GuMaOh} observed that an $\ell^2$-decoupling inequality with only an $R^\eps$ loss can be obtained by replacing the collection $\Theta_{\text{square}}(R^{-1})$  of squares with the multiscale collection $\Theta(R^{-1})$ of rectangles: 
\[ \| f\|_{L^p(\R^3)} \leq C_\eps R^{\eps} \bigg(\sum_{\theta \in \Theta(R^{-1})} \| f_\theta\|_{L^p}^2\bigg)^{1/2}, \qquad 2\leq p\leq 4.\]
Demeter and Wu \cite{DW} recovered this result as a consequence of a bilinear decoupling estimate for $H$. They also proved a bilinear refined decoupling inequality for $H$ and used it to establish Fourier restriction  estimates.

Our proof of \Cref{thm:weightedL2} combines a broad--narrow analysis (cf. \cite{ChoLee17,DW}) with the bilinear refined decoupling inequality of Demeter and Wu. Specifically, fix a dyadic $K\sim R^{\eps/12}$.  For each $x$, we are in the narrow case if either \[ |f(x)|\les |f_\tau(x)| \] for some $\tau\in\Theta_{\mathrm{square}}(K^{-2})$, or \[ |f(x)|\les |f_S(x)| \] for some horizontal or vertical strip $S$ of dimensions $1\times K^{-1}$. We handle the narrow case by rescaling and induction on scale. Otherwise, we are in the broad case, in which \[ |f(x)|\les K^2 |f_{\tau_1}(x)f_{\tau_2}(x)|^{1/2} \] for some ``transverse'' squares $\tau_1,\tau_2\in\Theta_{\mathrm{square}}(K^{-2})$. This case is handled using the bilinear refined decoupling inequality. Our reliance on this inequality currently restricts the result to dimension three.

We organize the paper as follows. In \Cref{sec:wavePacket}, we define the family $\{T_\theta\}_{\theta\in \Theta(R^{-1})}$ and obtain the covering property used to deduce \eqref{main:eq2thm} from \eqref{eq:extension-MTthm}. In \Cref{sec:proofthmweightedL2}, we reduce the proof of \Cref{thm:weightedL2} to a bilinear estimate, which is established in \Cref{sec:bilinear}. 
Finally, in \Cref{sec:extension-equivalence}, we establish the equivalence between \eqref{eqn:equivL2} and \eqref{eq:extension-MTthm} and then deduce \Cref{thm:MT0}.

We write $A\les B$ if $A\leq CB$ for some absolute constant $C>0$, and $A\sim B$ if $A\les B$ and $B\les A$. We write $A\les_\eps B$ if the implicit constant is allowed to depend on $\eps$. We often write $\| f\|_p$ for the $L^p$-norm $\| f\|_{L^p(\R^3)}$.

\subsection*{Acknowledgements}
JK thanks Zane Li for posing a question at \emph{Harmonic Analysis and its Interactions @ Nagoya 2026} that motivated this work, and the organizers for their warm hospitality and for bringing the participants together.
We thank Po-Lam Yung for asking whether \eqref{eqn:mainL2} was available, which prompted us to strengthen \eqref{eqn:equivL2}. We acknowledge the use of AI tools to assist in improving the language and clarity of the manuscript and identifying minor misprints.

\section{Caps covering the hyperbolic paraboloid}\label{sec:wavePacket}
In this section, we give precise definitions of  $\bar{\theta}$ and $T_\theta$. We choose our caps to be compatible with the affine transformations that will be used in the rescaling arguments (cf. \cite{GuoLiYung21}).

Given a pair of dyadic numbers $0<\delta_1,\delta_2 \leq 1$ and a point $c=(c_1,c_2) \in [-1,1)^2$,  we define the affine map $\mathcal{A}_{c,\delta}:\R^3 \to \R^3$ by 
\[ \mathcal{A}_{c,\delta} (\xi,\eta, s)  = (c_1+  \delta_1\xi ,c_2+\delta_2 \eta, c_1c_2+ c_2 \delta_1 \xi +c_1 \delta_2 \eta + \delta_1\delta_2s ). \] 
This affine transform maps the hyperbolic paraboloid over $[0,1]^{2}$ onto the hyperbolic paraboloid over $c+[0,\delta_1]\times [0,\delta_2]$ since 
\[ \mathcal{A}_{c,\delta} (\xi,\eta, \xi\eta) = \big(c_1+\delta_1 \xi, c_2+\delta_2 \eta, (c_1+\delta_1 \xi)(c_2+\delta_2 \eta) \big). \]

We define
\begin{align*}
		{\theta} = {\theta}(c,\delta) &:=  c+[0,\delta_1]\times [0,\delta_2], \\
	 \bar{\theta} = 	\bar{\theta}(c,\delta) &:=  \mathcal{A}_{c,\delta} \big([0,1]^{2}\times [-2, 2]\big).
\end{align*}
Then $\bar{\theta}(c,\delta)$, called a $\delta$-cap, 
is a parallelepiped with side lengths comparable to $\delta_1$, $\delta_2$, and $\delta_1\delta_2$, tangent to $H$ in the sense that its two tangential edge directions are orthogonal to the vector $n(c)=(-c_2,-c_1,1)$ normal to $H$ at $(c_1,c_2,c_1c_2)$.
 Recall that $\Theta(R^{-1}) = \bigcup_{\delta_1\delta_2=R^{-1}} \Theta_\delta$, where 
$$\Theta_\delta = \{ \theta(c,\delta):c\in 
(\delta_1 \Z \times \delta_2 \Z)\cap [-1,1)^2 \}.$$ 

We now define the parallelepiped  $T_\theta$ dual to $\bar{\theta}$.  For $\theta= \theta(c,\delta)$, we write $\mathcal{A}_\theta = \mathcal{A}_{c,\delta}$.
The standard matrix of the linear part of the affine transform $\mathcal{A}_\theta$ is 
$$M_\theta = M_{c,\delta}= \begin{pmatrix} \delta_1 & 0&0 \\ 0&\delta_2 & 0 \\  c_2 \delta_1& c_1  \delta_2& \delta_1\delta_2 \end{pmatrix}.$$ 
We denote the inverse transpose of $M_\theta$ by $L_\theta=L_{c,\delta} := (M_{c,\delta})^{-T}$. A direct computation shows that 
\[ L_\theta = L_{c,\delta} =  [\delta_1^{-1}e_1, \delta_2^{-1}e_2, (\delta_1\delta_2)^{-1} n(c)].\]
We define $$T_\theta := L_\theta (Q_1), \;\text{where } \; Q_r := [-r,r]^3.$$ 
Then 
\begin{align}\label{eqn:defT}
	T_{\theta} =\{ \delta_1^{-1} u_1 e_1 + \delta_2^{-1} u_2 e_2 + (\delta_1\delta_2)^{-1} u_3 n(c(\theta))\, : \, |u_i|\leq 1 \},
\end{align}
where we write $c(\theta) = c$ if $\theta=\theta(c,\delta)$ for some $\delta$.
Thus, $T_\theta$ is a parallelepiped of dimensions $\delta_1^{-1}\times \delta_2^{-1} \times (\delta_1\delta_2)^{-1}$, whose longest axis is parallel to $n(c(\theta))$. 

It is useful to observe that affine transforms $\A_{c,\delta}$ are well behaved under composition. For $\delta=(\delta_1,\delta_2)\in (0,\infty)^2$ and $c=(c_1,c_2)\in \R^2$, we define 
\[ \delta \circ c := (\delta_1c_1, \delta_2 c_2)    \;\; \text{ and } \;\; \delta^{-1} := (\delta_1^{-1},\delta_2^{-1}).   \]
A direct computation yields, for $c,d\in \R^2$ and $\delta,\Delta \in (0,\infty)^2$, 
\begin{equation}\label{eqn:composition}
    \mathcal{A}_{c,\delta} \circ \mathcal{A}_{d,\Delta} = \mathcal{A}_{ c+ \delta\circ d, \delta\circ \Delta}.
\end{equation} 

\begin{lemma}\label{lem:composition}
Suppose that $\tau = \theta(c,\delta) \in \Theta_\delta$ and $\theta' = \theta(d,\Delta)\in \Theta_\Delta$. Then 
\[ \mathcal A_\tau\circ \mathcal A_{\theta'} = \mathcal A_{\theta}  \]
for $\theta = \theta(c+\delta\circ d, \delta\circ \Delta)$. Consequently, $L_{\tau} L_{\theta'} = L_\theta$ and $$L_\tau(T_{\theta'}) = T_\theta.$$
Moreover, 
\[ \A_\tau\big(N_{(\delta_1 \delta_2 R)^{-1}}(H_{\theta'})\big) = N_{R^{-1}}(H_{\theta}) \]
and 
\[ \A_\tau\big(\theta' \times \R \big) = \theta \times \R. \]
\end{lemma}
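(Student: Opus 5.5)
The first identity follows directly from the composition formula \eqref{eqn:composition}. Since $\tau=\theta(c,\delta)$ and $\theta'=\theta(d,\Delta)$, we have $\mathcal A_\tau\circ\mathcal A_{\theta'}=\mathcal A_{c,\delta}\circ\mathcal A_{d,\Delta}=\mathcal A_{c+\delta\circ d,\,\delta\circ\Delta}$, and this is $\mathcal A_\theta$ by definition, with $\theta=\theta(c+\delta\circ d,\delta\circ\Delta)$. (We only use $\theta$ as a notation for the rectangle $c+\delta\circ d+[0,\delta_1\Delta_1]\times[0,\delta_2\Delta_2]$; whether it lies in a given $\Theta$ plays no role.)

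For the statements about $L$, pass to linear parts. The linear part of a composition of affine maps is the product of the linear parts, so $M_\tau M_{\theta'}=M_\theta$. Taking inverse transposes gives $(M_\tau M_{\theta'})^{-T}=M_\tau^{-T}M_{\theta'}^{-T}$, that is, $L_\tau L_{\theta'}=L_\theta$. Then $L_\tau(T_{\theta'})=L_\tau L_{\theta'}(Q_1)=L_\theta(Q_1)=T_\theta$.

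For the neighborhoods, compute $\mathcal A_\tau$ on a point $(\xi,\eta,\xi\eta+s)$ with $(\xi,\eta)\in\theta'$. By linearity in the last slot,
\[
\mathcal A_\tau(\xi,\eta,\xi\eta+s)=\mathcal A_\tau(\xi,\eta,\xi\eta)+(0,0,\delta_1\delta_2 s).
\]
The displayed identity for $\mathcal A_{c,\delta}(\xi,\eta,\xi\eta)$ shows that $\mathcal A_\tau(\xi,\eta,\xi\eta)=(\xi',\eta',\xi'\eta')$ with $(\xi',\eta')=c+\delta\circ(\xi,\eta)$. Hence $\mathcal A_\tau$ sends $\{|s|\le(\delta_1\delta_2R)^{-1}\}$ to vertical offsets $\{|s'|\le R^{-1}\}$. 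Moreover, the first two coordinates of $\mathcal A_\tau$ form the bijection $(\xi,\eta)\mapsto c+\delta\circ(\xi,\eta)$, which maps $\theta'=d+[0,\Delta_1]\times[0,\Delta_2]$ onto $c+\delta\circ d+[0,\delta_1\Delta_1]\times[0,\delta_2\Delta_2]=\theta$. The map is invertible, and for each fixed $(\xi,\eta)$ it is a bijection in the last variable. So the image is exactly $N_{R^{-1}}(H_\theta)$, and likewise $\mathcal A_\tau(\theta'\times\R)=\theta\times\R$.

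None of these steps is a real obstacle. The only care needed is in the neighborhood claim: one should use $N_{R^{-1}}(H_{\theta'})$ in the sense of points over $\theta'$, not the truncation over $[-1,1]^2$. With that reading, the vertical structure is preserved exactly because the last row of $M_\tau$ scales $s$ by $\delta_1\delta_2$ while the added terms depend only on $(\xi,\eta)$.
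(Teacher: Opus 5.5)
Your proposal is correct and follows essentially the same route as the paper: the first identity is read off from \eqref{eqn:composition}, and the claims about $L$ follow from $M_\tau M_{\theta'}=M_\theta$ by taking inverse transposes. The neighborhood claim rests on the same observation as the paper, namely that the third coordinate of $\mathcal A_\tau$ equals $uv+\delta_1\delta_2(s-\xi\eta)$, and your reading of $N_{R^{-1}}(H_\theta)$ as the set of points lying over $\theta$ (without truncation to $[-1,1]^2$) is the one the paper's proof implicitly uses.
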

\begin{proof}
The identity $\mathcal A_\tau\circ \mathcal A_{\theta'} = \mathcal A_{\theta}$ is just a restatement of \eqref{eqn:composition}. In particular, this yields 
\[ M_\tau M_{\theta'} = M_{\theta}. \]
Taking inverse transpose, we obtain $L_{\tau} L_{\theta'} = L_\theta$. 
Applying both sides to $Q_1$, we obtain $L_\tau(T_{\theta'}) = T_\theta$. 

For $(\xi,\eta,s)\in\R^3$, write $(u,v,w)=\A_\tau(\xi,\eta,s)$. Then $u=c_1+\delta_1\xi$, $v=c_2+\delta_2 \eta$ and 
\[ w= uv +  \delta_1\delta_2 (s-\xi\eta).\]
Therefore,
\[ (\xi,\eta) \in \theta' \text{ and } |s- \xi\eta| \leq (\delta_1\delta_2 R)^{-1},\]
if and only if 
\[ (u,v) \in \theta \text{ and } |w- uv| \leq R^{-1}. \]
Thus, $\A_\tau\big(N_{(\delta_1 \delta_2 R)^{-1}}(H_{\theta'})\big) = N_{R^{-1}}(H_{\theta})$. 

The identity $\A_\tau\big(\theta' \times \R \big) = \theta \times \R$ follows from the first two components of $\A_\tau$.
\end{proof}

The following observation verifies that \eqref{eq:extension-MTthm} implies \eqref{main:eq2thm} in \Cref{thm:MT}.
\begin{proposition}\label{prop:lPerp} Let $g$ be a function supported on $[-1,1]^2$. Then 
    $$\maxx{\theta \in \Theta(R^{-1})}{g_\theta \neq 0} \sup_{T \parallel T_\theta } \int_T w^2 \les R \sup_{\ell\perp \supp{g}} \int_{N_1(\ell)} w^2,$$
    where $N_1(\ell)$ denotes the 1-neighborhood of the line $\ell$. 
\end{proposition}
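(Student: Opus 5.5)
The plan is to fix $\theta=\theta(c,\delta)\in\Theta(R^{-1})$ with $g_\theta\neq 0$ and a translate $T=x_0+T_\theta$. Since $g_\theta\neq0$, there is a point $d\in\theta\cap\supp g$. The aim is to show that $T$ is covered by $O(R)$ sets of the form $N_1(\ell)$, where each $\ell$ is a line parallel to $n(d)$. Such lines satisfy $\ell\perp\supp g$ by definition, so
\[
\int_T w^2\le \sum_j \int_{N_1(\ell_j)} w^2 \les R\sup_{\ell\perp\supp g}\int_{N_1(\ell)}w^2 ,
\]
which gives the claim after taking the supremum over $T$ and the maximum over $\theta$. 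It helps to note that $|T_\theta|\sim (\delta_1\delta_2)^{-2}=R^2$ for dyadic $R$; for general $R$ one has $\delta_1\delta_2=4^{-k}\sim R^{-1}$. Tubes of dimensions $1\times1\times R$ have volume $R$, so $O(R)$ of them is exactly the volume count.

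The key step is to compare $T_\theta$ with a parallelepiped whose long axis points along $n(d)$ rather than $n(c)$. Write $d=c+(a_1,a_2)$ with $0\le a_i\le\delta_i$. Then
\[
(\delta_1\delta_2)^{-1}n(c)=(\delta_1\delta_2)^{-1}n(d)+(\delta_1\delta_2)^{-1}(a_2,a_1,0).
\]
The correction vector has first component $a_2/(\delta_1\delta_2)\le\delta_1^{-1}$ and second component $a_1/(\delta_1\delta_2)\le \delta_2^{-1}$. Therefore
\[
T_\theta\subset T'_\theta:=\{\delta_1^{-1}v_1e_1+\delta_2^{-1}v_2e_2+(\delta_1\delta_2)^{-1}u_3n(d): |v_1|,|v_2|\le 2,\ |u_3|\le 1\}.
\]
Equivalently, by \eqref{eqn:defT}, $T_\theta\subset L_{d,\delta}(Q_2)$, up to replacing $c$ by $d$ and enlarging by a constant factor. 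This is the only place the geometry of $H$ enters, namely through the linear dependence of $n$ on the point.

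Next I cover $T'_\theta$ by tubes. Its cross-section in the $e_1,e_2$ directions is a $4\delta_1^{-1}\times4\delta_2^{-1}$ rectangle in the plane $\{x_3=\text{const}\}$. This plane is transverse to $n(d)$, since $n(d)$ has third coordinate $1$. Cover the rectangle by $O(\delta_1^{-1}\delta_2^{-1})=O(R)$ unit squares centered at points $p_j$, and let $\ell_j=x_0+p_j+\R n(d)$. Any point of $x_0+T'_\theta$ lies at distance at most $\sqrt2$ from some $\ell_j$, because moving along $n(d)$ stays on the line. So $T\subset\bigcup_j N_{\sqrt2}(\ell_j)$. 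Each $N_{\sqrt2}(\ell_j)$ is covered by $O(1)$ sets $N_1(\ell)$ with $\ell\parallel n(d)$, which gives $O(R)$ sets in total. Here $|n(d)|\le\sqrt3$, so no length issues arise. The lines are infinite, so the length of $T$ along $n(d)$ costs nothing.

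I expect the main obstacle to be only bookkeeping. One must check that the shear from $n(c)$ to $n(d)$ is absorbed by the short sides, which is the computation above. One must also check the count $\delta_1^{-1}\delta_2^{-1}\sim R$, which holds for non-dyadic $R$ as well.
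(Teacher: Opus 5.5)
Your proposal is correct and follows the paper's proof essentially verbatim. Your shear computation $n(c)=n(d)+(a_2,a_1,0)$ is exactly the argument behind \Cref{lem:inclusion}, giving $T_\theta\subset L_{d,\delta}(Q_2)$. From there, both you and the paper cover this parallelepiped by $O(\delta_1^{-1}\delta_2^{-1})=O(R)$ unit-width tubes parallel to $n(d)$, and your remark that $\delta_1\delta_2=4^{-k}\sim R^{-1}$ handles non-dyadic $R$, which the paper leaves implicit.
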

\begin{proof}
We need the following lemma.
\begin{lemma}\label{lem:inclusion}
	If $d\in \theta(c,\delta)$, then $T_{\theta(c,\delta)} \subset 2T_{\theta(d,\delta)}$.
\end{lemma}
We defer the proof of \Cref{lem:inclusion} and proceed to the proof of \Cref{prop:lPerp}. 

Let $T$ be a translate of $T_\theta$ for some $\theta=\theta(c,\delta) \in \Theta(R^{-1})$ such that $g_\theta \neq 0$. Fix $d\in \theta\cap \supp g$.  By \Cref{lem:inclusion}, $T_\theta \subset 2T_{\theta(d,\delta)}$. Since $T_{\theta(d,\delta)}$ is a parallelepiped of dimensions $\delta_1^{-1}\times \delta_2^{-1}\times R$ with its long axis parallel to $n(d)$ and $\delta_1^{-1}\delta_2^{-1} = R$, we may cover $2T_{\theta(d,\delta)}$, hence $T_\theta$, by $O(R)$ tubes of dimensions $1\times 1\times R$ parallel to $n(d)$. 
Consequently, there are $O(R)$ lines $\ell$ parallel to $n(d)$ (and hence satisfying $\ell\perp\supp g$) such that the corresponding sets $N_1(\ell)$ cover $T$.
This yields \Cref{prop:lPerp}. 
\end{proof}

\begin{proof}[Proof of \Cref{lem:inclusion}]
	Let $c=(c_1,c_2) \in [-1,1)^2$ and $\delta=(\delta_1,\delta_2)$, so that $T_{\theta(c,\delta)}= L_{c,\delta}(Q_1)$. It suffices to show that if $d=(d_1,d_2) \in c+[0,\delta_1]\times [0,\delta_2]$, then $L_{c,\delta}(Q_1) \subset L_{d,\delta}(Q_2)$. 

    By \eqref{eqn:defT}, each element  $x\in L_{c,\delta}(Q_1)$ can be written as
    $$x=\big(\delta_1^{-1}(u_1 - \delta_2^{-1}u_3 c_2), \delta_2^{-1}(u_2 - \delta_1^{-1}u_3 c_1), \delta_1^{-1}\delta_2^{-1}u_3 \big)$$ for some $|u_i|\leq 1$. We set $v_3=u_3$ and 
	\begin{align*}
		v_1 &= u_1 - \delta_2^{-1} u_3(c_2-d_2) \\
		v_2 &= u_2 - \delta_1^{-1} u_3(c_1-d_1),
	\end{align*}
	so that we may write
	$$x=\big(\delta_1^{-1}(v_1 - \delta_2^{-1}v_3 d_2), \delta_2^{-1}(v_2 - \delta_1^{-1}v_3 d_1), \delta_1^{-1}\delta_2^{-1}v_3 \big).$$ 
	Therefore, to verify that $x\in L_{d,\delta}(Q_2)$, it suffices to show that $|v_i| \leq 2$. Clearly, $|v_3| = |u_3| \leq 1$. Since $|c_2-d_2| \leq \delta_2$, 
	\[ |v_1| \leq |u_1| +\delta_2^{-1} |u_3|  |c_2-d_2| \leq   2. \]
	A similar  bound holds for $|v_2|$.
\end{proof}

\section{Proof of \Cref{thm:weightedL2}}\label{sec:proofthmweightedL2}

We shall deduce \Cref{thm:weightedL2} from a refined weighted $L^2$ estimate, which involves wave packets. Before stating the estimate, we fix a standard wave packet decomposition of $f_\theta$ for each $\theta \in \Theta(R^{-1})$. 

\subsection{Wave packet decomposition}
Recall that $Q_r = [-r,r]^3$.
Let $\psi$ be a Schwartz function such that $\hat{\psi} \geq 0$ is supported in $Q_{1/2}$ and $\hat{\psi}=1$ on $Q_{1/4}$. Set $\psi_k = \psi(\cdot - k)$ so that $\hat{\psi_k}(\xi) =e^{-2\pi i k\cdot \xi} \hat{\psi}(\xi)$. For any $L^2$ function $F$ with $\supp \hat{F} \subset Q_{1/4}$, we have $\hat{F} = \hat{F} \hat{\psi}$.  By the Fourier series expansion on $Q_{1/2}$, 
\[ \hat{F}(\xi) = \sum_{k\in \Z^3} a_k e^{-2\pi i k \cdot \xi} \hat{\psi}(\xi) = \sum_{k\in \Z^3} a_k \hat{\psi_k}(\xi), \]
where $a_k$ is the Fourier coefficient of $\hat{F}$ on $Q_{1/2}$. 
By the definition of the Fourier coefficients and Plancherel's theorem,  we may write $a_k$ as 
\[ a_k = \int_{Q_{1/2}} \hat{F}(\xi) e^{2\pi i k\cdot \xi} d\xi = \int \hat{F}(\xi)   \hat{\psi}(\xi)  e^{2\pi i k\cdot \xi} d\xi  = \langle \hat{F}, \hat{\psi_k} \rangle = \langle F, \psi_k \rangle, \]
where $\langle f, g \rangle = \int f \bar g$ denotes the inner product on $L^2$. By  Fourier inversion, we have
\begin{equation}\label{eqn:wavepacketUnitScale}
   F = \sum_{k\in \Z^3} \langle F, \psi_k \rangle \psi_k. 
\end{equation}

For $\theta = \theta(c,\delta) \in \Theta(R^{-1})$, recall that 
$\supp \hat{f_\theta} \subset \bar{\theta} = \A_\theta([0,1]^2\times [-2,2] )$. We set 
\[ \hat{F}(\xi) = J_\theta^{1/2} \hat{f_\theta}( \A_\theta(8 \xi)), \]
where $J_\theta = 8^{3}\delta_1^2\delta_2^2$ is the Jacobian determinant of the mapping $\xi \mapsto \A_\theta (8\xi)$. Then $\supp{\hat{F}} \subset Q_{1/4}$ and 
\[ \| F \|_{L^2} = \| f_\theta \|_{L^2}. \]
A direct computation yields 
\[ \langle F, \psi_k \rangle = \langle f_\theta, \psi_{\theta,k} \rangle,\] where, with $c_\theta = (c_1,c_2,c_1c_2)$,  
\[ \psi_{\theta,k}(x) = J_\theta^{1/2} e^{2\pi i c_\theta \cdot x} \psi(8L_{\theta}^{-1} x - k). \]
Applying the decomposition \eqref{eqn:wavepacketUnitScale} and changing variables, we obtain 
\[ f_\theta = \sum_{k\in \Z^3} \langle f_\theta, \psi_{\theta,k} \rangle \psi_{\theta,k}.  \]

We set 
\[ \T_\theta = \{ T_{\theta,k}: k\in \Z^3 \}, \text{ where } T_{\theta,k} = L_\theta( k/8 + Q_1 ).\]
For $T=T_{\theta,k}\in \T_\theta$, we write $\psi_T = \psi_{\theta,k}$ and set
\[ f_{T} = \langle f_\theta, \psi_{T} \rangle \psi_{T},  \]
so that 
\[ f_\theta = \sum_{T\in \T_\theta} f_T.  \]
Note that $f_T$ is a wave packet, essentially localized to $T$ in physical space, while $\widehat{f_T}$ is supported in the enlarged cap $\A_\theta(Q_4)$, which contains $\bar{\theta}$ and has comparable dimensions.

We record a few standard properties of the wave packets that will be used below. First, observe that $\|\psi_{T} \|_{L^2} = \| \psi \|_{L^2} \sim 1$. Consequently, 
\begin{equation}\label{eqn:coefficient}
    |\langle f_\theta, \psi_{T} \rangle| \sim \| f_T \|_{L^2}. 
\end{equation}
Thus,  by Parseval's identity,
\begin{equation}\label{eqn:orthogonalWavePacket}
\sum_{T\in \T_\theta} \| f_T\|_{L^2}^2 \sim \sum_{T\in \T_\theta} |\langle f_\theta, \psi_{T} \rangle|^2  = \| f_\theta \|_{L^2}^2.
\end{equation}
For each $T=T_{\theta,k} \in \T_\theta$, we fix a $L^\infty$-normalized weight  
\[ \chi_T(x) := (1+|8L_\theta^{-1} x- k |)^{-1000}\] 
adapted to $T$. Since $\psi$ is Schwartz and $J_\theta \sim |T|^{-1}$, we have \[|\psi_T(x)| = J_\theta^{1/2} |\psi(8L_\theta^{-1} x- k )| \les_N |T|^{-1/2} \chi_T^N(x).\] 
Together with \eqref{eqn:coefficient}, this gives
\begin{equation}\label{eqn:ptWavePacket}
    |f_T(x)| \les_N \|f_T\|_{L^2}  |T|^{-1/2} \chi_T^N(x). 
\end{equation}

By \eqref{eqn:ptWavePacket} and Cauchy--Schwarz, we have 
\begin{equation}\label{eq:separated-packet-pointwise}
    \bigg|\sum_{T\in\mathbb T}f_T(x)\bigg|^2
    \lesssim_N
    \bigg(\sum_{T\in \T} \chi_T^N(x) \bigg)\Bigg(\sum_{T\in\mathbb T}
    \frac{\|f_T\|_2^2}{|T|}\chi_T(x)\Bigg)
\end{equation}
for any $\T\subset \cup_{\theta \in \Theta_{\mathrm{square}}(R^{-1})} \T_\theta$. We have the trivial bound
\begin{equation}\label{eqn:trivialBound}
\| \sum_{T\in \T} \chi_T \|_{L^\infty} \les R
\end{equation}
because 
\[ \sum_{T\in \T} \chi_T =\sum_{\theta \in \Theta_{\mathrm{square}}(R^{-1})}  \sum_{T\in \T\cap \T_\theta} \chi_T \les \sum_{\theta \in \Theta_{\mathrm{square}}(R^{-1})} 1 \les R. \]

For each $T_{\theta,k}$, define, for a fixed constant $\eps_0>0$ sufficiently small relative to the underlying $\eps>0$, 
\[ 2T_{\theta,k} := L_\theta (k/8 + Q_{R^{\eps_0}}). \]
Thus, $2T$ is the concentric $R^{\eps_0}$-dilate of $T$ and $\chi_T$ decays sharply away from $2T$. Consequently, if $X\subset \R^3$ is a set such that 
\[ X \cap  2T =\emptyset, \;\; \text{ for all } T\in \T, \]
then by choosing $N$ sufficiently large depending on $\eps_0$,   we obtain 
\begin{equation}\label{eqn:rapDec}
\| \sum_{T\in \T} \chi_T^N \|_{L^\infty(X)} \les R^{-2002}.
\end{equation}

\subsection{Proof of \Cref{thm:weightedL2}}
We define 
\[
    \|w\|_{L^2_{\mathrm{avg}}(T)}
    :=
    \left(
        \frac{1}{|T|}
        \int_{\mathbb R^3}w(x)^2 \chi_T(x)\,dx
    \right)^{1/2}.
\]
Let \(C(R)\) denote the best constant such that, for every
nonnegative weight \(w\) and every \(f\) satisfying
\(\supp\widehat f\subset N_{R^{-1}}(H)\),
\[
    \int_{\mathbb R^3}|f|^2w
    \leq
    C(R)
    \Bigg(
        \sum_{\theta\in\Theta(R^{-1})}
        \sum_{T\in\mathbb T_\theta}
        \|f_T\|_{L^2}^2
        \|w\|_{L^2_{\mathrm{avg}}(T)}^2
    \Bigg)^{1/2}
    \|f\|_{L^2}.
\]
To prove \Cref{thm:weightedL2}, it suffices to establish the following theorem.
\begin{theorem}\label{thm:weightedL2refined}
    For any $\eps>0$, there exists
$C_\eps>0$ such that
\begin{equation}\label{eqn:goalMain}
    C(R)\leq C_\eps R^\eps, \quad R\geq 1.
\end{equation}
\end{theorem}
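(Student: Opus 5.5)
We will prove \eqref{eqn:goalMain} by induction on scales, using a broad--narrow decomposition at an intermediate scale $K$. Fix $\eps>0$ and a dyadic $K\sim R^{\eps/12}$. Write $\Theta_{\mathrm{square}}(K^{-2})$ for the squares of side $K^{-1}$, and let $\mathcal S$ denote the $2K$ horizontal and vertical strips of dimensions $1\times K^{-1}$ (respectively $K^{-1}\times 1$). For each $x$ we are in one of two cases.

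\emph{Narrow case:} either $|f(x)|\les |f_\tau(x)|$ for some $\tau\in\Theta_{\mathrm{square}}(K^{-2})$, or $|f(x)|\les|f_S(x)|$ for some $S\in\mathcal S$.

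\emph{Broad case:} otherwise. Then there exist transverse squares $\tau_1,\tau_2$, meaning they lie in distinct rows and distinct columns at distance $\gtrsim K^{-1}$, such that
\[ |f(x)|\les K^{2}|f_{\tau_1}(x)f_{\tau_2}(x)|^{1/2}. \]
This dichotomy is the standard pigeonholing argument for the hyperbolic paraboloid, as in Cho--Lee and Demeter--Wu. Consequently
\[ |f(x)|^2\les \sum_\tau|f_\tau(x)|^2+\sum_S|f_S(x)|^2+K^{4}\max_{\tau_1,\tau_2\ \mathrm{transverse}}|f_{\tau_1}f_{\tau_2}|(x), \]
and we integrate each piece against $w$.

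\textbf{Narrow terms.} For $\tau=\theta(c,\delta)$ with $\delta=(K^{-1},K^{-1})$, or for a strip with $\delta=(1,K^{-2})$ up to a harmless adjustment of the dyadic scale, \Cref{lem:composition} gives the following. Pulling back by $\A_\tau$ sends $f_\tau$ to a function $g$ with Fourier support in $N_{(\delta_1\delta_2R)^{-1}}(H)$. Under this change of variables:
\begin{itemize}
\item $\Theta((\delta_1\delta_2R)^{-1})$ corresponds exactly to the caps of $\Theta(R^{-1})$ contained in $\tau$;
\item the wave packets correspond, since $L_\tau T_{\theta'}=T_\theta$ and the wave packet construction is affine-covariant;
\item the averages $\|w\|_{L^2_{\mathrm{avg}}}$ transform correctly, because we use averages and $w\circ L_\tau$ is again a nonnegative weight.
\end{itemize}
Hence
\[ \int|f_\tau|^2w\le C(R/K^2)\Big(\sum_{\theta\subset\tau}\sum_T\|f_T\|_2^2\|w\|^2_{L^2_{\mathrm{avg}}(T)}\Big)^{1/2}\|f_\tau\|_2. \]
We need a little care for caps in $\Theta(R^{-1})$ that are not subordinate to $\tau$, that is, those with $\delta_1>K^{-1}$. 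These simply do not appear on the right-hand side, which is fine since they only make the right side larger. For strips the rescaling gives $C(R/K^2)$ by the same argument.

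Summing over $\tau$ and $S$ with Cauchy--Schwarz, and using $\sum_\tau\|f_\tau\|_2^2\le\|f\|_2^2$, the squares contribute at most $C(R/K^2)\,\mathcal W\|f\|_2$, where $\mathcal W$ denotes the full wave-packet sum. Each cap $\theta$ lies in at most one $\tau$ and at most two strips, so the strips contribute at most $O(1)\,C(R/K^2)\,\mathcal W\|f\|_2$. Altogether the narrow part is $\le C_1 C(R/K^2)\,\mathcal W\|f\|_2$, where $C_1$ is absolute.

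\textbf{Broad terms.} The main obstacle is the bilinear estimate: for transverse $\tau_1,\tau_2$,
\[ \int|f_{\tau_1}f_{\tau_2}|\,w\le C_\eps' R^{\eps/2}K^{O(1)}\,\mathcal W\,\|f\|_2. \]
We would prove it as follows.
\begin{enumerate}
\item Decompose $f_{\tau_j}=\sum_{T\in\T_j}f_T$ into wave packets at the square scale $R^{-1/2}$, and pigeonhole dyadically in $\|f_T\|_2$ and in $\|w\|_{L^2_{\mathrm{avg}}}$ of the relevant parallelepipeds.
\item Apply Cauchy--Schwarz to get $\int|f_{\tau_1}f_{\tau_2}|w\le\|f_{\tau_1}f_{\tau_2}\|_{L^2}^{\,?}\cdots$. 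More concretely, bound $\int|f_{\tau_1}f_{\tau_2}|w\le\|(f_{\tau_1}f_{\tau_2})^{1/2}\|_{L^4(Y)}^2\|w\|_{L^2(Y)}$ on level sets $Y$.
\item Use the Demeter--Wu bilinear refined decoupling to control $\|\,|f_{\tau_1}f_{\tau_2}|^{1/2}\|_{L^4}$ by the number of wave packets through points of $Y$ times $\sum\|f_T\|^2$, with only an $R^{\eps}$ loss. The transversality of $\tau_1,\tau_2$ is exactly what removes the line-segment obstruction.
\item Relate the multiplicity count to $\|w\|_{L^2_{\mathrm{avg}}(T)}$, using \eqref{eq:separated-packet-pointwise}, the trivial bound \eqref{eqn:trivialBound}, and the rapid decay \eqref{eqn:rapDec} to discard tails.
\end{enumerate}
We expect matching the Demeter--Wu multiplicity parameter to the multiscale families $\T_\theta$ to be the delicate part of this step.

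\textbf{Closing the induction.} Combining the two cases gives
\[ C(R)\le C_1C(R/K^2)+C_\eps'K^{O(1)}R^{\eps/2}. \]
Since $K^{O(1)}\le R^{\eps/2}$ for our choice of $K$, iterating this recursion $O(1/\eps)$ times yields $C(R)\le C_\eps R^\eps$, because $C_1^{O(\log R/\log K)}=R^{O(1/\log K\cdot\ldots)}$ is absorbed once $K$ is a fixed small power of $R$. The base case is $C(R)\les1$ for $R\les1$, which is trivial.
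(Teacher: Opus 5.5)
The overall architecture is the paper's: broad--narrow at scale $K\sim R^{\eps/12}$, rescaling of the narrow pieces via \Cref{lem:composition}, Demeter--Wu for the broad part, and induction on scales. Your treatment of the narrow squares is correct. The genuine gap is in the broad term.

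The squares $\tau_1,\tau_2\in\Theta_{\mathrm{square}}(K^{-2})$ are only $K^{-1}$-transverse; their centers may differ by as little as $2K^{-1}$ in a coordinate. The bilinear refined decoupling \Cref{lem:bilinear-refined-decoupling}, by contrast, is stated for squares with $|\xi_1-\xi_2|\sim|\eta_1-\eta_2|\sim 1$. Decomposing $f_{\tau_j}$ into square wave packets at scale $R$ and invoking it directly with an unspecified $K^{O(1)}$ loss is not justified by that theorem, and your step 2 is left unfinished.

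The missing idea is an anisotropic affine rescaling.
\begin{itemize}
\item Let $d=c(\tau_2)-c(\tau_1)$ and choose dyadic $\delta_j$ with $|d_j|<\delta_j\le 2|d_j|$. Pull $f$ back by $\A_\tau$ with $\tau=\theta(c(\tau_1),\delta)$.
\item This sends $\tau_1,\tau_2$ to $\sigma_1=\theta(0,\Delta)$ and $\sigma_2=\theta(\delta^{-1}\circ d,\Delta)$, where $\Delta=\delta^{-1}\circ(K^{-1},K^{-1})$. Both coordinates of $\delta^{-1}\circ d$ have absolute value between $1/2$ and $1$.
\item The new function has Fourier support in $N_{R_0^{-1}}(H)$ with $R_0=\delta_1\delta_2R$. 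The requirement $\Delta_j\ge R_0^{-1/2}$ is exactly where $K\le R^{1/3}$ is used.
\item Apply Demeter--Wu at scale $R_0$ with caps in $\Theta_{\mathrm{square}}(R_0^{-1})$. By \Cref{lem:composition}, these pull back to caps in $\Theta(R^{-1})$ of dimensions $\delta_1R_0^{-1/2}\times\delta_2R_0^{-1/2}$. Wave packets and averages transform as in your narrow step.
\end{itemize}
Those caps are genuine rectangles whenever $\delta_1\neq\delta_2$, for instance when $\tau_1,\tau_2$ are $\sim K^{-1}$ apart in $\xi$ and $\sim 1$ apart in $\eta$. This is how the multiscale family enters the broad term. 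It is not a bookkeeping matter of ``matching the multiplicity parameter'', and a bound using only square caps at scale $R$, as your step 1 envisages, does not follow from the cited theorem.

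Three less serious points. First, in the separated case the right pigeonholing is as follows: localize to $R$-balls, fix $\|f_T\|_2\sim\nu_j$, and pigeonhole the multiplicities $M(Q,\W_j)$ over $R^{1/2}$-balls $Q$, not the weight averages. The quantity $\|w\|^2_{L^2_{\mathrm{avg}}(T)}$ then comes from Cauchy--Schwarz $\int_X|f_1f_2|w\le(\sum_Q\int_Q w^2)^{1/2}\|f_1f_2\|_{L^2(X)}$. You follow this with Bernstein, $\|f_T\|_4^4\les|T|^{-1}\|f_T\|_2^4$, and the double counting $\sum_Q M(Q,\W_j)\int_Q w^2=\sum_{T\in\W_j}\sum_{Q\cap 2T\neq\emptyset}\int_Q w^2$.

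Second, a strip in $\Theta_{\mathrm{strip}}(K^{-1})$ has $\delta_1\delta_2=K^{-1}$, so it rescales to $C(R/K)$, not $C(R/K^2)$. Strips of width $K^{-2}$ would not fit your dichotomy. This is harmless: under the hypothesis $C(R')\le C_\eps R'^{\eps}$ for $R'<R$, the narrow terms contribute at most $2C_0K^{-\eps}C_\eps R^\eps\le\frac{1}{2}C_\eps R^\eps$ once $K^\eps\ge 4C_0$.

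Third, the base case should be the trivial bound $C(R)\les R$, from \Cref{lem:trivialestimate} and \eqref{eqn:trivialBound}, used for $R\le R_\eps$. Choose $K\sim R^{\eps/12}$ afresh at each scale rather than iterating with one fixed $K$, so that $K\le R^{1/3}$ holds wherever the bilinear step is invoked.
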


We begin the proof of \Cref{thm:weightedL2refined}. To this end, we may assume that $R\in 4^\N$. We begin with a trivial bound.
\begin{lemma}\label{lem:trivialestimate}
    Let $\T\subset \cup_{\theta \in \Theta_{\mathrm{square}}(R^{-1})} \T_\theta$, $X\subset \R^3$ and $N\geq 1$. Then 
\begin{align*}
    \int_X \bigg|\sum_{T\in\mathbb T}f_T\bigg|^2w
    &\lesssim_N
    \| \sum_{T\in \T} \chi_T^N \|_{L^\infty(X)} \sum_{T\in \T}\frac{\|f_T\|_2^2}{|T|}\int_X w\chi_T \\
    &\lesssim_N \| \sum_{T\in \T} \chi_T^N \|_{L^\infty(X)}
    \left(
        \sum_{T\in\mathbb T}
        \|f_T\|_2^2\|w\|_{L^2_{\mathrm{avg}}(T)}^2
    \right)^{1/2}\bigg(\sum_{T\in\mathbb T} \|f_T\|_2^2\bigg)^{1/2}.
\end{align*}
\end{lemma}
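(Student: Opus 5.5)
The plan is to integrate the pointwise bound \eqref{eq:separated-packet-pointwise} over $X$ against $w$, and then apply Cauchy--Schwarz twice: once in $x$ for each packet, and once in the sum over $T$.

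\textbf{First inequality.} Apply \eqref{eq:separated-packet-pointwise} with $N$ replaced by a large $N'$, so that the first factor is $\sum_{T\in\T}\chi_T^{N}(x)$. Since $\chi_T\le 1$, lowering the power in $\chi_T^{N}$ only enlarges it, so we may take the $N$ in the statement to be at least $1$ without loss. More directly, repeat the proof of \eqref{eq:separated-packet-pointwise}. Write $f_T=\bigl(f_T\,\chi_T^{-1/2}\bigr)\chi_T^{1/2}$ and use \eqref{eqn:ptWavePacket} with exponent $2N$, which gives
\[
|f_T(x)|\lesssim_N \|f_T\|_2\,|T|^{-1/2}\chi_T^{N}(x)^{1/2}\chi_T(x)^{1/2}.
\]
Cauchy--Schwarz in $T$ then yields
\[
\Bigl|\sum_{T\in\T} f_T(x)\Bigr|^2\lesssim_N \Bigl(\sum_{T\in\T}\chi_T^N(x)\Bigr)\sum_{T\in\T}\frac{\|f_T\|_2^2}{|T|}\chi_T(x).
\]
Multiply by $w$, integrate over $X$, and bound the first factor by its supremum on $X$. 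This gives the first line, with $\int_X w\chi_T$.

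\textbf{Second inequality.} Bound $\int_X w\chi_T\le\int_{\R^3}w\chi_T$ and apply Cauchy--Schwarz with $\chi_T=\chi_T^{1/2}\chi_T^{1/2}$:
\[
\int w\chi_T\le\Bigl(\int w^2\chi_T\Bigr)^{1/2}\Bigl(\int\chi_T\Bigr)^{1/2}.
\]
Since $\chi_T(x)=(1+|8L_\theta^{-1}x-k|)^{-1000}$, a change of variables gives $\int\chi_T\lesssim |\det L_\theta|\sim|T|$. Hence
\[
\frac{1}{|T|}\int w\chi_T\lesssim \Bigl(\frac1{|T|}\int w^2\chi_T\Bigr)^{1/2}=\|w\|_{L^2_{\mathrm{avg}}(T)}.
\]
It follows that
\[
\sum_{T\in\T}\frac{\|f_T\|_2^2}{|T|}\int_X w\chi_T\lesssim\sum_{T\in\T}\|f_T\|_2\cdot\|f_T\|_2\|w\|_{L^2_{\mathrm{avg}}(T)}.
\]
A final Cauchy--Schwarz over $T\in\T$ produces the product of the two square-root sums, which is the second line.

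No step is a real obstacle. The only point needing care is matching the power of $\chi_T$ in the first factor with the single power $\chi_T$ in the second factor. This is handled by splitting the decay from \eqref{eqn:ptWavePacket} between the two factors, which is possible because the Schwartz decay allows arbitrarily high powers.
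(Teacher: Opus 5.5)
Your proposal is correct and follows the paper's proof: integrate the pointwise bound \eqref{eq:separated-packet-pointwise} against $w$ over $X$, apply Cauchy--Schwarz in $x$ using $\int\chi_T\lesssim|T|$, and finish with Cauchy--Schwarz over $T$. The one addition is that you spell out how the decay from \eqref{eqn:ptWavePacket} is split between the two factors, a step the paper leaves implicit.
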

\begin{proof}
Integrating \eqref{eq:separated-packet-pointwise} against $w\geq0$
gives
\[
    \int_X\bigg|\sum_{T\in\mathbb T}f_T\bigg|^2 w
    \lesssim_N
   \| \sum_{T\in \T} \chi_T^N \|_{L^\infty(X)} \sum_{T\in \T}\frac{\|f_T\|_2^2}{|T|}\int_X w\chi_T.
\]
For each $T$, the Cauchy--Schwarz inequality implies
\[
    \frac1{|T|}\int_X w\chi_T
    \lesssim \|w\|_{L^2_{\mathrm{avg}}(T)},
\]
where we used $\int_{X}\chi_T \leq \| \chi_T \|_{L^1} \lesssim|T|$.
Applying Cauchy--Schwarz to the sum over $T$, we obtain the second claimed inequality. 
\end{proof}

As a corollary, we show that $C(R)\les R$, which provides the base case for an induction argument.
\begin{lemma}\label{lem:base-case}
There exists a constant $C>0$ such that
\[
    C(R)\leq C R
\]
for every $R\geq1$.
\end{lemma}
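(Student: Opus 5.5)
We decompose $f$ using only the square caps, and then apply \Cref{lem:trivialestimate} with the trivial bound \eqref{eqn:trivialBound}. Since $R\in 4^\N$, the collection $\Theta_{\mathrm{square}}(R^{-1})=\Theta_{(R^{-1/2},R^{-1/2})}$ is one of the partitions $\Theta_\delta$ making up $\Theta(R^{-1})$. In particular, it partitions $[-1,1]^2$, so
\[
    f=\sum_{\theta\in\Theta_{\mathrm{square}}(R^{-1})} f_\theta=\sum_{T\in\T} f_T,\qquad \T:=\bigcup_{\theta\in\Theta_{\mathrm{square}}(R^{-1})}\T_\theta .
\]
Here we use $\supp\widehat f\subset N_{R^{-1}}(H)$, so $\widehat f$ lives over $[-1,1]^2$. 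Each $f_\theta$ is then expanded into wave packets as in the previous subsection. To avoid convergence issues, I would first apply the argument to finite subcollections of $\T$ and pass to the limit in $L^2$; this is routine.

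Next apply \Cref{lem:trivialestimate} with $X=\R^3$ and $N=1$. By \eqref{eqn:trivialBound}, $\|\sum_{T\in\T}\chi_T\|_{L^\infty}\les R$, which gives
\[
    \int_{\R^3}|f|^2w\les R\Bigg(\sum_{T\in\T}\|f_T\|_2^2\|w\|_{L^2_{\mathrm{avg}}(T)}^2\Bigg)^{1/2}\Bigg(\sum_{T\in\T}\|f_T\|_2^2\Bigg)^{1/2}.
\]
The first factor is a subsum of the full sum over $\theta\in\Theta(R^{-1})$ appearing in the definition of $C(R)$. Since all terms are nonnegative, it is bounded by that full sum. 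For the second factor, \eqref{eqn:orthogonalWavePacket} together with Plancherel's theorem for the partition $\Theta_{\mathrm{square}}(R^{-1})$ gives
\[
    \sum_{T\in\T}\|f_T\|_2^2\sim\sum_{\theta}\|f_\theta\|_2^2=\|f\|_2^2 .
\]
Combining these yields $C(R)\le CR$ for $R\in4^\N$.

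For general $R$ with $4^k\le R<4^{k+1}$, the same argument applies with $4^k$ in place of $R$. This is consistent with how $\Theta(R^{-1})$ and the square caps are defined in that range, and it costs at most a constant factor. In applying \Cref{lem:trivialestimate} with $N=1$, I would make sure the pointwise wave packet bound \eqref{eqn:ptWavePacket} still holds with exponent $N=1$. It does, since the Schwartz decay of $\psi$ dominates $\chi_T$.

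There is no real obstacle here; this is the base case of the induction. The only point needing care is the overlap bound \eqref{eqn:trivialBound}: for each fixed square $\theta$, the lattice translates $\chi_{T}$, $T\in\T_\theta$, sum to $O(1)$, and there are $O(R)$ square caps.
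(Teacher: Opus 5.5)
Your proposal is correct and matches the paper's proof: decompose $f$ into wave packets over the square caps, apply the trivial estimate lemma with $X=\R^3$ and $N=1$ together with the overlap bound $\|\sum_{T}\chi_T\|_{L^\infty}\lesssim R$, then use orthogonality of wave packets, Plancherel, and $\Theta_{\mathrm{square}}(R^{-1})\subset\Theta(R^{-1})$. Your extra remarks (non-dyadic $R$, validity of the pointwise bound with $N=1$, truncation to finite collections) are harmless details that the paper leaves implicit.
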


\begin{proof}
Fix $R\geq1$. Recall the wave packet decomposition
\[
    f=\sum_{\theta\in \Theta_{\mathrm{square}}(R^{-1})}f_\theta= \sum_{\theta\in \Theta_{\mathrm{square}}(R^{-1})}\sum_{T\in \T_\theta} f_T.
\]
Using \Cref{lem:trivialestimate} with $X= \R^3$, $\T=\bigcup_{\theta\in\Theta_{\mathrm{square}}(R^{-1})}\T_\theta$ and $N=1$, as well as \eqref{eqn:trivialBound}, we obtain
\[
\begin{aligned}
    \int_{\mathbb R^3}|f|^2w
    &\lesssim
    R
    \left(
\sum_{\theta\in \Theta_{\mathrm{square}}(R^{-1})}\sum_{T\in \T_\theta}
        \|f_T\|_2^2\|w\|_{L^2_{\mathrm{avg}}(T)}^2
    \right)^{1/2}\left(\sum_{\theta\in \Theta_{\mathrm{square}}(R^{-1})}\sum_{T\in \T_\theta}\|f_T\|_2^2\right)^{1/2}.\\
    &\lesssim
    R 
    \Bigg(
        \sum_{ \theta\in\Theta_{\mathrm{square}}(R^{-1})}
        \sum_{T\in\mathbb T_\theta}
        \|f_T\|_{L^2}^2\|w\|_{L^2_{\mathrm{avg}}(T)}^2
    \Bigg)^{1/2}\|f\|_{L^2},
\end{aligned}
\]
where we used \eqref{eqn:orthogonalWavePacket} and Plancherel's theorem in the second inequality. Note that $\Theta_{\mathrm{square}}(R^{-1})\subset\Theta(R^{-1})$. Thus $C(R)\lesssim  R$ as claimed.
\end{proof}

The following estimate is the key ingredient in the proof of \Cref{thm:weightedL2refined}.

\begin{lemma}\label{lem:constant}
Let $R\in 4^\N$. For every \(\eps>0\), there exist \(A_\eps>0\) and an absolute
constant \(C_0\geq1\) such that, for every dyadic
\(K\in[1,R^{1/3}]\),
\[
    C(R)
    \leq
    C_0\left(
        C(K^{-2}R)+C(K^{-1}R)+K^6A_\eps R^\eps
    \right).
\]
\end{lemma}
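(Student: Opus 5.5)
We follow the broad--narrow scheme outlined in the introduction. Fix $f$ with $\supp\widehat f\subset N_{R^{-1}}(H)$ and a weight $w\geq0$. Decompose $f=\sum_{\tau\in\Theta_{\mathrm{square}}(K^{-2})}f_\tau$, and also write $f=\sum_S f_S$, where $S$ ranges over the horizontal strips $[-1,1]\times I$ and vertical strips $I\times[-1,1]$ with $|I|=K^{-1}$ dyadic. Pointwise, we split each $x$ into three cases. In the first narrow case, $|f(x)|\leq C|f_\tau(x)|$ for some square $\tau$ of side $K^{-2}$. In the second, $|f(x)|\leq C|f_S(x)|$ for some strip $S$. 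Otherwise, by the standard broad--narrow dichotomy (cf. \cite{ChoLee17,DW}), there exist $\tau_1,\tau_2\in\Theta_{\mathrm{square}}(K^{-2})$ lying in different horizontal and different vertical $K^{-1}$-strips (transverse pairs) such that $|f(x)|\lesssim K^{2}|f_{\tau_1}(x)f_{\tau_2}(x)|^{1/2}$. Then
\[
\int|f|^2w\lesssim \sum_\tau\int|f_\tau|^2w+\sum_S\int|f_S|^2w+K^{4}\sum_{\tau_1,\tau_2\ \mathrm{transverse}}\int|f_{\tau_1}f_{\tau_2}|w .
\]
Note there are $O(K^4)$ squares and $O(K)$ strips.

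\textbf{Narrow terms via rescaling.} For $\tau=\theta(c,(K^{-2},K^{-2}))$, the function $g=f_\tau\circ L_\tau$ (suitably normalized) has Fourier support in $\mathcal A_\tau^{-1}(N_{R^{-1}}(H_\tau))=N_{(K^{-4}R)^{-1}}(H)$ by \Cref{lem:composition}. So at first sight we would get $C(K^{-4}R)$. Since one needs only $C(K^{-2}R)$, we instead treat $f_\tau$ as lying in the strip at scale $(K^{-2},1)$, or simply use the monotonicity that a square of side $K^{-2}$ sits inside a $K^{-2}\times 1$ rectangle $\sigma$. Rescaling by $\mathcal A_\sigma$ maps $N_{R^{-1}}(H_\sigma)$ to $N_{(K^{-2}R)^{-1}}(H)$. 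Likewise, a $1\times K^{-1}$ strip rescales to the scale $K^{-1}R$. Under $\mathcal A_\sigma$, \Cref{lem:composition} shows that the caps $\theta'\in\Theta((K^{-2}R)^{-1})$ correspond to caps $\theta\in\Theta(R^{-1})$ contained in $\sigma$, with $L_\sigma(T_{\theta'})=T_\theta$. The wave packets $f_T$ therefore transform into wave packets at the new scale, and $\|w\|_{L^2_{\mathrm{avg}}(T)}$ is invariant after composing $w$ with $L_\sigma$ (the Jacobians cancel once we use the $L^2$-average normalization together with $\|f\|_2$). Hence
\[
\int|f_\tau|^2w\leq C(K^{-2}R)\Big(\sum_{\theta\subset\sigma}\sum_{T}\|f_T\|_2^2\|w\|_{L^2_{\mathrm{avg}}(T)}^2\Big)^{1/2}\|f_\tau\|_2 .
\]
Here we must check that the wave packets of $f_\tau$ at caps $\theta\subset\tau$ coincide with $(f_\theta)_T$, which holds because $\widehat{f_\tau}1_{\theta\times\R}=\widehat{f_\theta}$ when $\theta\subset\tau$. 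The caps of $\Theta(R^{-1})$ not contained in $\tau$ are handled by noting that $f_\tau$ has the same wave packets, restricted. Summing over $\tau$ with Cauchy--Schwarz gives the bound $C(K^{-2}R)(\sum_\theta\sum_T\cdots)^{1/2}\|f\|_2$, since every $\theta$ lies in $O(1)$ of the $\tau$'s and $\sum_\tau\|f_\tau\|_2^2=\|f\|_2^2$. The strips are treated in the same way and yield $C(K^{-1}R)$. The multiscale family is essential here, because the caps inside a strip include long rectangles.

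\textbf{Broad term: the main obstacle.} We need the bilinear estimate
\[
\int|f_{\tau_1}f_{\tau_2}|w\leq A_\eps K^{2}R^{\eps}\Big(\sum_{\theta\in\Theta(R^{-1})}\sum_{T}\|f_T\|_2^2\|w\|^2_{L^2_{\mathrm{avg}}(T)}\Big)^{1/2}\|f\|_2
\]
for transverse $\tau_1,\tau_2$; together with the factor $K^4$ this produces the $K^6$ term. My plan is to decompose $f_{\tau_j}$ into wave packets at the square scale $\Theta_{\mathrm{square}}(R^{-1})$ and dyadically pigeonhole the packets by $\|f_T\|_2$ and by the value of $\|w\|_{L^2_{\mathrm{avg}}(T)}$, at a cost of $\log R$ factors. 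We then pigeonhole on $R^{1/2}$-cubes $Q$ according to the number of packets through them. On each such piece, Cauchy--Schwarz gives $\int|f_{\tau_1}f_{\tau_2}|w\leq\|w\|_{L^2(Q)}\|f_{\tau_1}f_{\tau_2}\|_{L^2(Q)}$, and the $L^4$ bilinear refined decoupling of Demeter--Wu \cite{DW} controls $\|f_{\tau_1}f_{\tau_2}\|_{2}$ by the packet count and the $\ell^2$ sum of the packet norms. The hard point is matching the $w^2$ mass on the cubes with the $L^2$-averages over the multiscale parallelepipeds $T_\theta$, $\theta\in\Theta(R^{-1})$. This requires regrouping the square packets into packets of rectangular caps $\delta_1\times\delta_2$, where the line segments of $H$ force the use of non-square caps. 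I would attempt this regrouping via the incidence structure in the Demeter--Wu refined bilinear inequality. Rapidly decaying tails would be discarded using \eqref{eqn:rapDec} and \Cref{lem:trivialestimate}.
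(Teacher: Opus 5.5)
Your proposal has two genuine gaps, one in the narrow case and one in the broad case.

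\textbf{Narrow case.} You misread $\Theta_{\mathrm{square}}(K^{-2})$. The index is the area $\delta_1\delta_2=K^{-2}$, so these are $K^{-1}\times K^{-1}$ squares, and there are $O(K^2)$ of them. With the correct reading, rescaling by $\mathcal A_\tau$ gives $C(K^{-2}R)$ directly, and only caps $\theta\subset\tau$ appear: $\widehat g$ lives over $[0,1]^2$, and every dyadic $\theta'$ either lies in $[0,1]^2$ or has $g_{\theta'}=0$.

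Your workaround, placing a $K^{-2}$-square $\tau$ inside a $K^{-2}\times 1$ rectangle $\sigma$, does not work. After rescaling by $\mathcal A_\sigma$, the definition of $C(K^{-2}R)$ produces every cap $\theta\subset\sigma$ with $(f_\tau)_\theta\neq0$. These include long rectangles crossing $\partial\tau$. For such $\theta$, the wave packets of $f_\tau$ are $\langle f_{\theta\cap\tau},\psi_T\rangle\psi_T$, not the packets $f_T=\langle f_\theta,\psi_T\rangle\psi_T$ of $f$ that appear in $C(R)$. Cancellation among the pieces $\langle f_{\theta\cap\tau},\psi_T\rangle$, $\tau\subset\sigma$, is possible. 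Consequently $\sum_T|\langle f_{\theta\cap\tau},\psi_T\rangle|^2\|w\|^2_{L^2_{\mathrm{avg}}(T)}$ is not controlled by $\sum_T\|f_T\|_2^2\|w\|^2_{L^2_{\mathrm{avg}}(T)}$; only the unweighted sums agree. So the claim that $f_\tau$ ``has the same wave packets, restricted'' is false.

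\textbf{Broad case.} This is the more serious gap: the broad term is not proved, and your plan misses the key idea. The Demeter--Wu inequality as quoted (\Cref{lem:bilinear-refined-decoupling}) needs squares separated by $\sim1$ in \emph{both} coordinates. For $K^{-1}$-transverse $\tau_1,\tau_2$, the separations $|d_1|,|d_2|$ can lie anywhere in $[2K^{-1},2]$, independently of each other.

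The paper handles this with an anisotropic rescaling $\mathcal A_\tau$, where $\tau=\theta(c(\tau_1),\delta)$ and $|d_j|<\delta_j\leq2|d_j|$.
\begin{itemize}
\item This map sends $\tau_1,\tau_2$ to $\Delta_1\times\Delta_2$ rectangles $\sigma_1,\sigma_2$, with $\Delta_j=\delta_j^{-1}K^{-1}\leq 1/4$, contained in transverse squares.
\item The new scale is $R_0=\delta_1\delta_2R$, and $K\leq R^{1/3}$ guarantees $\Delta_j\geq R_0^{-1/2}$.
\item Demeter--Wu is applied there with \emph{square} caps at scale $R_0$.
\item $L_\tau$ carries these back to $\delta_1R_0^{-1/2}\times\delta_2R_0^{-1/2}$ caps, which belong to $\Theta(R^{-1})$ (cf.\ \eqref{eq:weight-scaling}).
\end{itemize}
This is exactly where the multiscale family enters the broad term.

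No regrouping of square packets into rectangular ones is needed. Regrouping is not the real obstruction anyway: the target right-hand side is a sum of nonnegative terms over all of $\Theta(R^{-1})\supset\Theta_{\mathrm{square}}(R^{-1})$, so a square-cap bound would be even stronger. The obstruction is transversality.

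Likewise, the step you flag as hard, matching $w^2$ on $R^{1/2}$-balls with tube averages, is routine. It follows from
\begin{itemize}
\item Cauchy--Schwarz, $\int_X|f_1f_2|w\leq(\sum_Q\int_Q w^2)^{1/2}\|f_1f_2\|_{L^2(X)}$;
\item Bernstein, $\|f_T\|_4^4\lesssim|T|^{-1}\|f_T\|_2^4$, together with $\|f_T\|_2\sim\nu_j$;
\item the double count $\sum_Q m_j(Q)\int_Q w^2=\sum_{T}\sum_{Q\cap 2T\neq\emptyset}\int_Q w^2\less\sum_T|T|\,\|w\|^2_{L^2_{\mathrm{avg}}(T)}$.
\end{itemize}
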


Let $0<\eps<1/100$. Assuming \Cref{lem:constant}, we prove  \Cref{thm:weightedL2refined} by induction on the scale $R$. 

Choose \(R_\eps\) sufficiently large so that, whenever
\(R\geq R_\eps\), $K\geq R^{\eps/12}/2$ implies 
\(K\geq4\) and \(K^\eps\geq4C_0\). By \Cref{lem:base-case}, there is
\(\widetilde C_\eps>0\) such that
\[
    C(R)\leq\widetilde C_\eps
    \qquad\text{for }1\leq R\leq R_\eps.
\]
Set
\[
    C_\eps:=\max\bigl(\widetilde C_\eps,2C_0A_{\eps/2}\bigr).
\]
Then \(C(R)\leq C_\eps R^\eps\) for \(1\leq R\leq R_\eps\).

Now let \(R>R_\eps\) and assume the desired bound up to scale \(R/4\).
Choose a dyadic \(K\in[R^{\eps/12}/2,R^{\eps/12}]\). Since
\(K^{-2}R\leq K^{-1}R\leq R/4\), \Cref{lem:constant}, applied with
\(\eps/2\), and the induction hypothesis give
\[
    C(R)
    \leq
    C_0\left(
        C_\eps(K^{-2}R)^\eps
        +C_\eps(K^{-1}R)^\eps
        +K^6A_{\eps/2}R^{\eps/2}
    \right).
\]
Since \(K^6\leq R^{\eps/2}\) and
\(C_0K^{-\eps}\leq1/4\), we obtain
\[
    C(R)
    \leq
    \frac12C_\eps R^\eps
    +C_0A_{\eps/2}R^\eps
    \leq C_\eps R^\eps.
\]
This closes the induction and proves
\Cref{thm:weightedL2refined} subject to \Cref{lem:constant}.

\subsection{Broad--narrow reduction}
In this subsection, we begin the proof of  \Cref{lem:constant} with a broad--narrow reduction.
For a dyadic $K \in [1,R^{1/2}]$, we write 
\begin{align*}
    \Theta_{\text{strip}}(K^{-1}) = \Theta_{(K^{-1},1)} \cup \Theta_{(1,K^{-1})}. 
\end{align*}

\begin{definition}[$K^{-1}$-transverse rectangles]
Let $\delta=(\delta_1,\delta_2)$ satisfy $\delta_j\leq K^{-1}$ for $j=1,2$. We say that two rectangles $\tau_1,\tau_2\in\Theta_\delta$ are $K^{-1}$-transverse if each coordinate of $c(\tau_1)-c(\tau_2)$ has absolute value at least $2K^{-1}$.
\end{definition}

We use the following pointwise broad--narrow decomposition; see the proof of \cite[Theorem 2.4]{DW}.
\begin{proposition}\label{prop:broad-narrow}
For every $x\in\R^3$ and dyadic $K\in [1,R^{1/2}]$,
\[
|f(x)| \lesssim \max_{\tau\in \Theta_{\mathrm{square}}(K^{-2})  }|f_\tau(x)| +
\max_{S\in \Theta_{\mathrm{strip}}(K^{-1})}|f_{S}(x)| + K^2
\max_{\substack{\tau_1,\tau_2\in\Theta_{\mathrm{square}}(K^{-2})  \\
\tau_1,\tau_2\;K^{-1}\mathrm{-transverse}}}
|f_{\tau_1}(x)f_{\tau_2}(x)|^{1/2}.
\]
\end{proposition}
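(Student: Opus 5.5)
Fix $x$ and write $f=\sum_{\tau\in\Theta_{\mathrm{square}}(K^{-2})}f_\tau$; there are $K^2$ squares $\tau$, indexed by a $K\times K$ grid. Since there are only $K^2$ summands, the bound will come from a pigeonhole on the size of the terms, with the ``large'' terms grouped into rows, columns, or transverse pairs. Set $M:=\max_\tau|f_\tau(x)|$ and define the significant set
\[ \mathcal S=\{\tau:\ |f_\tau(x)|\geq K^{-2}M\}. \]
The non-significant squares contribute at most $K^2\cdot K^{-2}M=M$, so
\[ |f(x)|\leq 2M+\Big|\sum_{\tau\in\mathcal S}f_\tau(x)\Big|, \]
and the first term is already controlled by the narrow square term. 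The rest of the argument bounds the sum over $\mathcal S$.

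\emph{Case 1: transverse significant squares exist.} Suppose $\mathcal S$ contains two $K^{-1}$-transverse squares $\tau_1,\tau_2$, i.e.\ both coordinates of $c(\tau_1)-c(\tau_2)$ are at least $2K^{-1}$ in absolute value. Then
\[ |f_{\tau_1}(x)f_{\tau_2}(x)|^{1/2}\geq K^{-2}M. \]
Hence
\[ |f(x)|\leq 2M+K^2M\lesssim K^2\cdot K^2|f_{\tau_1}f_{\tau_2}|^{1/2}. \]
This loses an extra factor $K^2$ over the claimed bilinear term, so this crude route is not sufficient, and the threshold has to be handled more carefully.

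The refinement is to pigeonhole on the significant term carrying the most mass. Let $\tau_1$ attain $M$. If some $\tau_2$ transverse to $\tau_1$ has $|f_{\tau_2}(x)|\geq K^{-4}M$, then
\[ K^2|f_{\tau_1}f_{\tau_2}|^{1/2}\geq M\geq K^{-2}|f(x)|, \]
which still loses. So I instead compare with $|f(x)|$ directly. Set
\[ \mathcal S'=\{\tau:\ |f_\tau(x)|\geq K^{-2}|f(x)|/100\}. \]
The complement contributes at most $|f(x)|/100$, so $|f(x)|\leq 2|\sum_{\mathcal S'}f_\tau(x)|$. If $\mathcal S'$ contains a transverse pair $\tau_1,\tau_2$, then
\[ K^2|f_{\tau_1}f_{\tau_2}|^{1/2}\geq |f(x)|/100, \]
which is exactly the broad term.

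\emph{Case 2: no transverse pair in $\mathcal S'$.} Then every pair in $\mathcal S'$ has centers within $2K^{-1}$ in some coordinate. This is the combinatorial step I expect to be the main obstacle. I would show that such a set lies in a union of $O(1)$ sets, each either a horizontal strip, a vertical strip (both of width $O(K^{-1})$, hence covered by $O(1)$ elements of $\Theta_{\mathrm{strip}}(K^{-1})$), or a single square block. Fix $\tau_0\in\mathcal S'$. Every other element lies in the cross formed by the row and column of width $O(K^{-1})$ through $\tau_0$. If $\mathcal S'$ has an element $\tau_a$ in the row far from $\tau_0$ and an element $\tau_b$ in the column far from $\tau_0$, then $\tau_a,\tau_b$ are transverse unless one of them lies near the crossing, which gives a contradiction. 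Therefore $\mathcal S'$ is contained in the row strip or in the column strip, up to $O(1)$ squares near the crossing.

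In that situation write $\sum_{\mathcal S'}f_\tau=f_S-\sum_{\tau\subset S,\ \tau\notin\mathcal S'}f_\tau$ plus $O(1)$ extra terms. A strip contains only $K$ squares, and each omitted square is small, so the omitted terms sum to at most $K\cdot K^{-2}|f(x)|/100$. The $O(1)$ extra terms are each bounded by $M$. Since $O(1)$ strips in $\Theta_{\mathrm{strip}}(K^{-1})$ cover $S$, this gives
\[ |f(x)|\lesssim \max_S|f_S(x)|+M, \]
which completes the bound.
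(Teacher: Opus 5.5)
Your final argument is correct and is the standard significant-square argument that the paper takes from Demeter--Wu (the paper gives no proof of its own). You threshold at $K^{-2}|f(x)|/100$ relative to $|f(x)|$ itself, take a transverse pair in $\mathcal S'$ if one exists, and otherwise confine $\mathcal S'$ to $O(1)$ rows or columns, writing the signed sum as $f_S$ minus the $O(K)$ small squares in the strip. This keeps the strip function $f_S$ intact.

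For the write-up, delete the abandoned Case~1 and the $M$-relative attempt, and note that there are $4K^2$ squares rather than $K^2$, which only changes constants. You can also state the combinatorics precisely. If $\tau_0\in\mathcal S'$ and $\mathcal S'$ contained one square at index distance $\geq 3$ from $\tau_0$ horizontally and another at index distance $\geq 3$ vertically, those two would be $K^{-1}$-transverse. Hence $\mathcal S'$ lies in five consecutive columns or five consecutive rows, which is a union of $O(1)$ elements of $\Theta_{\mathrm{strip}}(K^{-1})$, and the $\max_\tau|f_\tau(x)|$ term is not even needed.
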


Consequently, we have
\begin{equation}\label{eqn:broadNarrow}
    \begin{split}
        \int_{\mathbb R^3} |f|^2 w \lesssim& \sum_{\tau\in \Theta_{\mathrm{square}}(K^{-2})  }\int_{\mathbb R^3}|f_\tau|^2 w +
\sum_{S\in \Theta_{\text{strip}}(K^{-1})} \int_{\mathbb R^3}|f_{S}|^2 w \\
&+ K^6
\max_{\substack{\tau_1,\tau_2\in\Theta_{\mathrm{square}}(K^{-2})  \\
\tau_1,\tau_2\;K^{-1}\text{-transverse}}}
\int_{\mathbb R^3} |f_{\tau_1}f_{\tau_2}|w.
    \end{split}
\end{equation}

We treat each term on the right-hand side of
\eqref{eqn:broadNarrow} using the following propositions.

\begin{proposition}[Scaling]\label{prop:scaling}
Let
\(\tau\in\Theta_{(K_1^{-1},K_2^{-1})}\), where
\(K_1,K_2\in[1,R^{1/2}]\) are dyadic. Then
\[
 \int_{\mathbb R^3}|f_\tau|^2w
 \leq 
 C(K_1^{-1}K_2^{-1}R)
 \Bigg( \sum_{\substack{
       \theta\in\Theta(R^{-1})\\
       \theta \subset \tau}}
 \sum_{T\in \T_\theta} \| f_T\|_{L^2}^2
   \|w\|_{L^2_{\mathrm{avg}}(T)}^2
 \Bigg)^{1/2}
 \|f_\tau\|_2
\]
whenever
\(\supp\widehat f\subset N_{R^{-1}}(H)\).
\end{proposition}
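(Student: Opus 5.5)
The plan is to prove the scaling proposition by rescaling with the affine map $\A_\tau$ and invoking the definition of $C(\cdot)$ at the smaller scale $R' := K_1^{-1}K_2^{-1}R$. Write $\tau=\theta(c,\delta)$ with $\delta=(K_1^{-1},K_2^{-1})$.

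\emph{Step 1: rescale $f_\tau$.} Since $\supp\widehat{f_\tau}\subset N_{R^{-1}}(H_\tau)$ and
\[
N_{R^{-1}}(H_\tau)=\A_\tau\big(N_{(R')^{-1}}(H_{[0,1]^2})\big)
\]
by \Cref{lem:composition} (with $\theta'=[0,1]^2$), define $g$ by
\[
\widehat g(\zeta)=|\det M_\tau|^{1/2}\,\widehat{f_\tau}(\A_\tau\zeta).
\]
Then $\supp\widehat g\subset N_{(R')^{-1}}(H)$ and $\|g\|_2=\|f_\tau\|_2$. On the physical side,
\[
g(y)=|\det M_\tau|^{-1/2}e^{-2\pi i(\cdots)}f_\tau(L_\tau y),
\]
so $|g(y)|^2=|\det M_\tau|^{-1}|f_\tau(L_\tau y)|^2$. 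Set $\tilde w:=w\circ L_\tau$. A change of variables then gives
\[
\int|f_\tau|^2w=\int|g|^2\tilde w .
\]
If $R'$ is not a power of $4$, the convention for $\Theta$ at non-power-of-4 scales should make the collections match; since $R\in4^\N$ and $K_1,K_2$ are dyadic, I will take them powers of $2$ and handle the remaining dyadic mismatch by the rounding convention stated in the introduction.

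\emph{Step 2: apply the definition of $C(R')$.} This yields
\[
\int|g|^2\tilde w\le C(R')\Big(\sum_{\theta'\in\Theta((R')^{-1})}\sum_{T'\in\T_{\theta'}}\|g_{T'}\|_2^2\|\tilde w\|^2_{L^2_{\rm avg}(T')}\Big)^{1/2}\|g\|_2 .
\]

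\emph{Step 3: match caps and wave packets.} This is the main bookkeeping step. By \Cref{lem:composition}, the map $\theta'\mapsto\theta:=\theta(c+\delta\circ d,\delta\circ\Delta)$ is a bijection from $\Theta((R')^{-1})$ onto $\{\theta\in\Theta(R^{-1}):\theta\subset\tau\}$, because $\Delta_1\Delta_2=(R')^{-1}$ if and only if $\delta_1\Delta_1\delta_2\Delta_2=R^{-1}$. We also have $\A_\tau\circ\A_{\theta'}=\A_\theta$ and $L_\tau L_{\theta'}=L_\theta$.

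Next verify $g_{\theta'}$ corresponds to $f_\theta$. The identity $\A_\tau(\theta'\times\R)=\theta\times\R$ shows $\widehat{g_{\theta'}}$ is the pullback of $\widehat{f_\theta}$. The wave packets then correspond exactly: unwinding the definition
\[
\langle g_{\theta'},\psi_{\theta',k}\rangle=\langle f_\theta,\psi_{\theta,k}\rangle,
\]
since both equal the $k$-th coefficient of the same normalized function $F$, as $\A_\theta(8\xi)=\A_\tau(\A_{\theta'}(8\xi))$. Hence $\|g_{T'_{\theta',k}}\|_2=\|f_{T_{\theta,k}}\|_2$, and $T_{\theta,k}=L_\tau(T'_{\theta',k})$.

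For the weights, $\chi_{T'}(y)=(1+|8L_{\theta'}^{-1}y-k|)^{-1000}$. Since $L_{\theta'}^{-1}L_\tau^{-1}=L_\theta^{-1}$, we get $\chi_{T'}(L_\tau^{-1}x)=\chi_T(x)$. Changing variables, with $|T|=|\det L_\tau|\,|T'|$,
\[
\|\tilde w\|^2_{L^2_{\rm avg}(T')}=\frac{1}{|T'|}\int w(L_\tau y)^2\chi_{T'}(y)\,dy=\frac{1}{|T|}\int w^2\chi_T=\|w\|^2_{L^2_{\rm avg}(T)} .
\]

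\emph{Step 4: conclude.} Substituting these identities into the bound from Step 2 gives exactly the claimed inequality with constant $C(K_1^{-1}K_2^{-1}R)$. I expect the only delicate point to be the phase factor $e^{2\pi ic_\theta\cdot x}$ in $\psi_{\theta,k}$ versus the modulation introduced by $\A_\tau$. This should work out because the translation parts compose as in \eqref{eqn:composition}, but it needs a careful check.
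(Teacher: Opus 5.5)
Your proposal is correct and follows the paper's proof essentially step for step: rescale by $\A_\tau$, apply the definition of $C(K_1^{-1}K_2^{-1}R)$ to $g$, and transport caps, tubes, weights and wave-packet coefficients back via $\A_\tau\circ\A_{\theta'}=\A_\theta$ and $L_\tau L_{\theta'}=L_\theta$. Your observation that $\langle g_{\theta'},\psi_{\theta',k}\rangle$ and $\langle f_\theta,\psi_{\theta,k}\rangle$ are both the $k$-th coefficient of the same normalized function is a slightly cleaner substitute for the paper's explicit identity relating $\psi_{\theta,k}$ and $\psi_{\theta',k}$, and it already disposes of the phase issue you flag.

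Two small corrections.
\begin{itemize}
\item The map $\theta'\mapsto\theta$ is a bijection onto $\{\theta\in\Theta(R^{-1}):\theta\subset\tau\}$ only from the caps $\theta'\subset[0,1]^2$. The other caps contribute nothing, since $g_{\theta'}=0$ for them.
\item The rounding convention does not make the collections match when $R/(K_1K_2)\notin4^{\N}$: the rescaled caps then have area $2R^{-1}$. You should therefore assume $R/(K_1K_2)\in4^{\N}$, as the paper tacitly does by writing $\Delta_1\Delta_2=K_1K_2R^{-1}$. This suffices for the broad--narrow application if $K$ is chosen in $4^{\N}$.
\end{itemize}
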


In particular, Plancherel's theorem gives
\[
    \sum_{\tau\in\Theta_{\mathrm{square}}(K^{-2})}
    \|f_\tau\|_2^2
    =
    \|f\|_2^2,
    \qquad
    \sum_{S\in\Theta_{\mathrm{strip}}(K^{-1})}
    \|f_S\|_2^2
    \lesssim
    \|f\|_2^2.
\]
Therefore,
\begin{equation}\label{eqn:scaling}
\begin{split}
 &\sum_{\tau\in\Theta_{\mathrm{square}}(K^{-2})}
   \int_{\mathbb R^3}|f_\tau|^2w
 +\sum_{S\in\Theta_{\mathrm{strip}}(K^{-1})}
   \int_{\mathbb R^3}|f_S|^2w\\
 &\qquad\lesssim
 \bigl(C(K^{-2}R)+C(K^{-1}R)\bigr)
  \Bigg( \sum_{
       \theta\in\Theta(R^{-1})}
 \sum_{T\in \T_\theta} \| f_T\|_{L^2}^2
    \|w\|_{L^2_{\mathrm{avg}}(T)}^2
 \Bigg)^{1/2}
 \|f\|_2,
\end{split}
\end{equation}
where we used Cauchy--Schwarz for the sums over $\tau$ and $S$.
For the bilinear term, we use the following estimate.

\begin{proposition}[Bilinear estimate]\label{prop:bilinear}
Let
\(\tau_1,\tau_2\in\Theta_{\mathrm{square}}(K^{-2})\) be
\(K^{-1}\)-transverse squares for some dyadic $K\in [1,R^{1/3}]$. For every \(\eps>0\), there exists
\(A_\eps>0\) such that, for every \(R\geq1\) and every nonnegative
weight \(w\),
\[
\begin{split}
 \int_{\mathbb R^3}|f_{\tau_1}f_{\tau_2}|w
 \leq{}&
 A_\eps R^\eps
 \prod_{j=1}^2
 \Bigg(
 \sum_{\substack{
       \theta\in\Theta(R^{-1})\\
       \theta\subset \tau_j}}
 \sum_{T\in \T_\theta} \| f_{T} \|_{L^2}^2
   \|w\|_{L^2_{\mathrm{avg}}(T)}^2
 \Bigg)^{1/4}
 \|f_{\tau_j}\|_2^{1/2}
\end{split}
\]
whenever
\(\supp\widehat f\subset N_{R^{-1}}(H)\).
\end{proposition}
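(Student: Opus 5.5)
Following the strategy announced in the introduction, the plan is to deduce \Cref{prop:bilinear} from the bilinear refined decoupling inequality of Demeter and Wu \cite{DW}, using a dyadic pigeonholing of wave packets in the spirit of \cite{GIOW,CaIlWa}. Write $f_{\tau_j}=\sum_{\theta\subset\tau_j}\sum_{T\in\mathbb T_\theta}f_T$, where $\theta$ ranges over the multiscale family $\Theta(R^{-1})$ as in \cite{DW}; at the outset we fix one partition of $\tau_j$ for the wave packet expansion. Modulo $R^{-1000}$ tails, which are controlled by \eqref{eqn:rapDec} and \Cref{lem:trivialestimate}, we may restrict to $B_{R^{2}}$, discard wave packets with negligible $\|f_T\|_2$, and assume all remaining $\|f_T\|_2$ are comparable up to $R^{O(1)}$. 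We then pigeonhole dyadically in two quantities: the size $\lambda_j\sim\|f_T\|_2$ and the weight level $\mu_j\sim\|w\|_{L^2_{\mathrm{avg}}(T)}$. This produces subcollections $\mathbb W_j$ of wave packets, with only $O((\log R)^4)$ choices in total, so $f_{\tau_j}$ may be replaced by $\sum_{T\in\mathbb W_j}f_T$ at cost $R^{\eps}$. Next, pigeonhole $w$ dyadically, $w\sim 2^k$ on $Y_k$, so that it suffices to estimate $2^k\int_{Y_k}|f_1f_2|$.

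The key step is to apply the bilinear refined decoupling of \cite{DW} on $Y_k$, further refined by pigeonholing $Y_k$ into unit cubes $Q$ that meet roughly $M_j$ tubes of $\mathbb W_j$. On the union of such cubes, refined decoupling gives, at some exponent $p$ (the bilinear endpoint, $p=4$ or so), an estimate of the form
\[
\int_{Y}|f_1f_2|^{p/2}\lessapprox \prod_j \Big(M_j^{\frac p2-1}\sum_{T\in\mathbb W_j}\|f_T\|_p^p\Big)^{1/2}.
\]
Each $f_T$ satisfies $\|f_T\|_p^p\sim\lambda_j^p|T|^{1-p/2}$. We combine this with Hölder, $\int_Y|f_1f_2|\le|Y|^{1-2/p}\|f_1f_2\|_{L^{p/2}(Y)}$, and with counting bounds relating $|Y|$, $M_j$ and $\mu_j$. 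The essential counting inequality is
\[
2^{2k}|Y\cap T|\le|T|\mu_j^2,
\]
summed over $T\in\mathbb W_j$, which gives $|Y|M_j\lesssim\#\mathbb W_j|T|\mu_j^2 2^{-2k}$. Substituting and optimizing, the target right side becomes $\prod_j(\#\mathbb W_j\lambda_j^2\mu_j^2)^{1/4}(\#\mathbb W_j\lambda_j^2)^{1/4}$, which is what we need, since $\|f_{\tau_j}\|_2^2\gtrsim\#\mathbb W_j\lambda_j^2$ by \eqref{eqn:orthogonalWavePacket}. This exponent bookkeeping should mirror the convex case in \cite[Thm 4.1]{CaIlWa}. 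The trivial bound via \Cref{lem:trivialestimate} handles the regime where $M_j$ is small.

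The main obstacle is that the refined decoupling of \cite{DW} is formulated with its own multiscale caps and wave packets. The tubes appearing in the incidence count, and the scale at which Demeter–Wu's $M$ is measured, must be matched with our $T\in\mathbb T_\theta$ for $\theta\in\Theta(R^{-1})$. I would first verify this compatibility using the rescaling maps $\A_\theta$ and \Cref{lem:composition}, together with the inclusion \Cref{lem:inclusion}, to compare caps across scales. Transversality, with losses polynomial in $K\le R^{1/3}$, is absorbed into $A_\eps$ only if \cite{DW} gives at most a $K^{O(1)}$ dependence; if it does not, the statement as written, with no $K$ factor, may require $K\le R^{\eps}$.
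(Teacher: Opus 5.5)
Your $p=4$ bookkeeping (Hölder/Cauchy--Schwarz against the weight, Bernstein $\|f_T\|_4^4\lesssim|T|^{-1}\|f_T\|_2^4$, incidence counting) is essentially what the paper does for \Cref{prop:bilinear-separated}. However, the step you leave open as ``the main obstacle'' is the actual content of this statement, so there is a genuine gap. The refined decoupling being invoked (\Cref{lem:bilinear-refined-decoupling}) is for square caps in $\Theta_{\mathrm{square}}(R^{-1})$ over squares separated by $\sim1$ in both coordinates. Here $\tau_1,\tau_2$ are only $K^{-1}$-transverse with $K$ up to $R^{1/3}$, and the separation $d=c(\tau_2)-c(\tau_1)$ may be very anisotropic ($|d_1|\gg|d_2|$). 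Your fallback is also incorrect: a $K^{O(1)}$ loss cannot be absorbed into $A_\eps$. The constant $A_\eps$ must be uniform in $K$ (this is how it is used in \Cref{lem:constant}), and $K^{O(1)}$ can be $R^{O(1)}$. Finally, ``fix one partition of $\tau_j$'' is legitimate, since the right-hand side sums over all partitions. But which partition you fix is the whole point, and you never specify it.

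The missing idea is an anisotropic rescaling that removes $K$ entirely. You mention $\A_\theta$ and \Cref{lem:composition}, but only for comparing caps; they should be used to reduce the transversality. Take dyadic $\delta_j\in(|d_j|,2|d_j|]$, let $\tau=\theta(c(\tau_1),\delta)$, and set $\hat g(\zeta)=\delta_1\delta_2\hat f(\A_\tau(\zeta))1_{[-1,1]^2\times\R}(\zeta)$. Then $\supp\hat g\subset N_{R_0^{-1}}(H)$ with $R_0=\delta_1\delta_2R$. Moreover, $\tau_j\times\R=\A_\tau(\sigma_j\times\R)$, where $\sigma_1=\theta(0,\Delta)$, $\sigma_2=\theta(c,\Delta)$, $\Delta=\delta^{-1}\circ(K^{-1},K^{-1})$ and $|c_j|\geq1/2$. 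The pieces are now $O(1)$-transverse, so Demeter--Wu applies at scale $R_0$ with square caps (\Cref{prop:bilinear-separated}). The hypothesis $K\leq R^{1/3}$ is what guarantees $\Delta_j\geq R_0^{-1/2}$. Pulling the square caps $\theta'=\theta(d',\Delta')$ of scale $R_0$ back gives $\theta(c(\tau_1)+\delta\circ d',\delta\circ\Delta')\in\Theta(R^{-1})$, which are rectangles of eccentricity $\delta_1/\delta_2$. Under this map $T=L_\tau(T')$, and $\|f_T\|_2$ and $\|w\|_{L^2_{\mathrm{avg}}(T)}$ are preserved as in \eqref{eq:weight-scaling}. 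This is exactly why the multiscale family appears on the right-hand side, and why there is no $K$-loss; without this step your argument only covers bounded $K$.

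There are also some smaller issues:
\begin{itemize}
\item You cannot ``restrict to $B_{R^2}$'', since the wave packets fill $\R^3$. Instead, localize to $R$-balls $B$ via $\T_j(B)$ and sum with Cauchy--Schwarz, using that each $T$ meets $O(R^{\eps_0})$ balls. This localization also makes the pigeonholing of $\|f_T\|_2$ legitimate.
\item The decoupling theorem is stated on unions of $R^{1/2}$-balls, not unit cubes.
\item Pigeonholing $w$ into level sets $Y_k$ creates an uncontrolled number of dyadic levels. It is unnecessary: at $p=4$ one can use $\int_X|f_1f_2|w\leq(\sum_Q w^2(Q))^{1/2}\|f_1f_2\|_{L^2(X)}$ directly.
\end{itemize}
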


Since \(\|f_{\tau_j}\|_2\leq\|f\|_2\) by Plancherel's theorem, 
\Cref{prop:bilinear} yields
\begin{equation}\label{eqn:bilinear}
 \int_{\mathbb R^3}|f_{\tau_1}f_{\tau_2}|w
 \leq
 A_\eps R^\eps
 \Bigg(
 \sum_{ \theta\in\Theta(R^{-1})}
 \sum_{T\in \T_\theta} \| f_{T} \|_{L^2}^2
   \|w\|_{L^2_{\mathrm{avg}}(T)}^2
 \Bigg)^{1/2}
 \|f\|_2
\end{equation}

Hence, \eqref{eqn:broadNarrow}, \eqref{eqn:scaling} and \eqref{eqn:bilinear} imply
\Cref{lem:constant}. We prove \Cref{prop:scaling} and \Cref{prop:bilinear} in the following
subsections.

\subsection{Rescaling: Proof of \Cref{prop:scaling}}

Let $\delta=(K_1^{-1},K_2^{-1})$.  Suppose that
\[
    \tau=\theta(c,\delta)
\]
for some  \(c=(c_1,c_2) \in (K_1^{-1}\Z \times K_2^{-1}\Z) \cap [-1,1)^2 \). Define \(g\) by
\[
    \widehat g(\zeta)   := (K_1K_2)^{-1}
    \widehat{f_\tau}(\mathcal A_\tau(\zeta)).
\]
By Plancherel's theorem, we have
\begin{equation}\label{eq:scaling-L2}
    \|g\|_2^2
    = \|f_\tau\|_2^2.
\end{equation}

A change of variables in the Fourier inversion formula gives
\[
    f_\tau(x)
    =
    (K_1K_2)^{-1}e^{2\pi ic_\tau \cdot x}
       g(L_\tau^{-1} x),
\]
for $c_\tau = (c_1,c_2,c_1c_2)$. Thus,
\begin{equation}\label{eq:scaling-physical}
    \int_{\mathbb R^3}|f_\tau|^2 w =
    \int_{\mathbb R^3}|g|^2\widetilde w, \text{ where }  \widetilde w:=w\circ L_\tau.
\end{equation}
Since $\supp \widehat{f_\tau} \subset N_{R^{-1}}(H_\tau)$, it follows from \Cref{lem:composition}, with $\theta'=[0,1]^2$, that
\[
    \supp\widehat g \subset \A_\tau^{-1} (N_{R^{-1}}(H_\tau)) = 
    N_{K_1K_2 R^{-1}}(H_{[0,1]^2}).
\]

We may now apply the definition of $C(K_1^{-1}K_2^{-1}R)$
to $g$ to obtain
\[
\int_{\mathbb R^3}|g|^2\widetilde w
\leq C(K_1^{-1}K_2^{-1}R)
\Bigg( \sum_{\theta'\in\Theta(K_1K_2R^{-1})}
\sum_{T'\in\mathbb T_{\theta'}}
\|g_{T'}\|_2^2
\|\widetilde w\|_{L^2_{\mathrm{avg}}(T')}^2 \Bigg)^{1/2} \|g\|_2.
\]
Note that if $\theta'$ is not a subset of $[0,1]^2$, then $g_{\theta'} = 0$ by the support property of $\hat{g}$. Hence, the sum over $\theta'\in\Theta(K_1K_2R^{-1})$ may be restricted to those $\theta'$ contained in $[0,1]^2$.

Let $\theta'\in\Theta(K_1K_2R^{-1})$ satisfy $\theta' \subset [0,1]^2$. Write $\theta'=\theta(d,\Delta)\in\Theta_\Delta$ for some $d\in (\Delta_1 \Z\times  \Delta_2 \Z)\cap [0,1)^2$ and $\Delta_1\Delta_2=K_1K_2R^{-1}$.
Then \[ \theta := \theta(c+\delta\circ d, \delta\circ \Delta) \in \Theta(R^{-1})\] as 
\[ c+\delta\circ d \in (K_1^{-1}\Delta_1 \Z \times K_2^{-1}\Delta_2 \Z)\cap [-1,1)^2, \quad \text{ and }\quad  (K_1^{-1} \Delta_1)(K_2^{-1} \Delta_2) =  R^{-1}.  \]
Moreover, since $d_i + \Delta_i \leq 1$, we have 
\[ \theta \subset \tau. \]
Recall from \Cref{lem:composition} that
$L_\tau L_{\theta'}=L_\theta$. Thus, for
$T'=T_{\theta',k}\in\T_{\theta'}$, we have
\[
    T:=L_\tau(T') = L_\tau L_{\theta'}(k/8+Q_1)
    =L_\theta(k/8+Q_1)
    =T_{\theta,k}\in\T_\theta.
\]
Since $\chi_T(L_\tau y)=\chi_{T'}(y)$, a change of variables gives
\[
    \|\widetilde w\|_{L^2_{\mathrm{avg}}(T')}^2
    =\|w\|_{L^2_{\mathrm{avg}}(T)}^2.
\]

We next compare the corresponding wave packets. 
The definition of $g$ and Fourier inversion give
\[
    f_\theta(x)
    =(K_1K_2)^{-1}e^{2\pi i c_\tau\cdot x}
    g_{\theta'}(L_\tau^{-1}x).
\]
Since
\[
    M_\theta=M_\tau M_{\theta'},
    \qquad
    c_\theta=c_\tau+M_\tau c_{\theta'},
\]
and
\[
    J_\theta^{1/2}
    =(\det M_\tau)^{1/2}J_{\theta'}^{1/2}
    =(K_1K_2)^{-1}J_{\theta'}^{1/2},
\]
the definition of $\psi_{\theta,k}$ gives
\[
    \psi_{\theta,k}(x)
    =(K_1K_2)^{-1}e^{2\pi i c_\tau\cdot x}
    \psi_{\theta',k}(L_\tau^{-1}x).
\]
Since $(K_1K_2)^{-2}\det L_\tau=1$, we obtain
\[
    \langle f_\theta,\psi_{\theta,k}\rangle
    =\langle g_{\theta'},\psi_{\theta',k}\rangle.
\]
Therefore, by the definition of the wave packets,
\[
    f_T(x)
    =(K_1K_2)^{-1}e^{2\pi i c_\tau\cdot x}
    g_{T'}(L_\tau^{-1}x).
\]
Another change of variables yields
\[
    \|f_T\|_2^2
    =(K_1K_2)^{-2}\det(L_\tau)\|g_{T'}\|_2^2
    =\|g_{T'}\|_2^2.
\]

Consequently,
\begin{equation}\label{eq:weight-scaling}
 \sum_{\substack{
       \theta'\in\Theta(K_1K_2R^{-1})}}
 \sum_{T'\in \T_{\theta'}} \| g_{T'}\|_{L^2}^2
    \|\widetilde w\|_{L^2_{\mathrm{avg}}(T')}^2
 \leq
 \sum_{\substack{
       \theta\in\Theta(R^{-1}) \\ \theta \subset \tau}}
 \sum_{T\in \T_\theta} \| f_{T}\|_{L^2}^2
    \|w\|_{L^2_{\mathrm{avg}}(T)}^2.
\end{equation}
Combining this with  \eqref{eq:scaling-L2} and \eqref{eq:scaling-physical} finishes the proof of  \Cref{prop:scaling}.

\subsection{Bilinear estimates}
In this section, we reduce \Cref{prop:bilinear} to a bilinear estimate involving $1/4$-transverse rectangles. 
\begin{proposition}\label{prop:bilinear-separated}
Let $\Delta = (\Delta_1,\Delta_2)$ be a pair of dyadic numbers such that $R^{-1/2} \leq \Delta_j \leq 1/4$ for $j=1,2$. Let $\sigma_1=\theta(0,\Delta)$ and $\sigma_2=\theta(c,\Delta)$ be rectangles in $\Theta_\Delta$ such that $|c_j|\geq 1/2$ for $j=1,2$. 
Then, for every \(\eps>0\) and every nonnegative weight \(w\),
\[
 \int_{\mathbb R^3}|f_{\sigma_1}f_{\sigma_2}|w
 \lesssim_\eps
 R^\eps
 \prod_{j=1}^2 \bigg( \sum_{\substack{
       \theta\in\Theta_{\mathrm{square}}(R^{-1})\\
       \theta\subset\sigma_j}}
 \sum_{T\in \T_\theta}  \| f_T\|_{L^2}^2
     \| w\|_{L^2_{\mathrm{avg}}(T)}^2 \bigg)^{1/4} \| f_{\sigma_j}\|_{L^2}^{1/2},
\]
whenever
\(\supp\widehat f\subset N_{R^{-1}}(H)\).
\end{proposition}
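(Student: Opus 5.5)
We plan to follow the strategy of \cite{CaIlWa}, with their refined decoupling inequality replaced by the bilinear refined decoupling inequality of Demeter and Wu \cite{DW}. We decompose $f_{\sigma_j}=\sum_{\theta\in\Theta_{\mathrm{square}}(R^{-1}),\,\theta\subset\sigma_j}\sum_{T\in\T_\theta}f_T$ into square wave packets of dimensions $R^{1/2}\times R^{1/2}\times R$. By dyadic pigeonholing, at a cost of $O(\log R)^{C}$ pieces, we restrict each $f_{\sigma_j}$ to a subcollection $\W_j$ of wave packets with the following properties. The coefficients $\|f_T\|_2$ are comparable for all $T\in\W_j$. The quantities $\|w\|_{L^2_{\mathrm{avg}}(T)}$ are comparable, say to $\lambda_j$, for all $T\in\W_j$. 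The number of packets per $\theta$ is comparable across $\theta$. Wave packets whose coefficients or averaged weights are at most $R^{-1000}$ times the maximum are discarded and handled by \Cref{lem:trivialestimate}.

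With the collections $\W_j$ fixed, we cover $\R^3$ by $R^{1/2}$-cubes $Q$ and sort them according to how many tubes $2T$, $T\in\W_j$, pass through $Q$, for $j=1,2$. Pigeonholing once more, we obtain a union $Y$ of cubes on which these multiplicities are comparable to $M_1$ and $M_2$ respectively. By \eqref{eqn:rapDec}, the contribution from outside $\bigcup_{T\in\W_1}2T\cap\bigcup_{T\in\W_2}2T$ is negligible. On $Y$ we apply H\"older's inequality:
\[
\int_Y |f_{\sigma_1}f_{\sigma_2}|w\le \Big\|\,|f_{\sigma_1}f_{\sigma_2}|^{1/2}\Big\|_{L^4(Y)}^{2}\,\|w\|_{L^2(Y)} .
\]
The bilinear refined decoupling of \cite{DW} for transverse $\sigma_1,\sigma_2$ bounds the first factor by
\[
R^{\eps}\Big(\tfrac{M_1M_2}{|\W_1||\W_2|}\Big)^{1/4}\|f_{\sigma_1}\|_2^{1/2}\|f_{\sigma_2}\|_2^{1/2}
\]
up to normalization. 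This is where the hypotheses $|c_j|\ge 1/2$ and $R^{-1/2}\le\Delta_j\le 1/4$ are needed, possibly after an affine rescaling via \Cref{lem:composition}. For the second factor we estimate
\[
\|w\|_{L^2(Y)}^2\le M_j^{-1}\sum_{T\in\W_j}\int_{2T}w^2\lesssim R^{O(\eps_0)}M_j^{-1}|\W_j|\,|T|\,\lambda_j^2 ,
\]
taking the geometric mean over $j=1,2$. Since $|T|\|f_T\|_2^{-2}$ is comparable to $\|f_{\sigma_j}\|_2^{-2}|\W_j|$ times a normalization, the factors of $M_j$ and $|\W_j|$ should cancel. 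We are then left with $\prod_j\big(\sum_{T\in\W_j}\|f_T\|_2^2\lambda_j^2\big)^{1/4}\|f_{\sigma_j}\|_2^{1/2}$, which is the right-hand side of the proposition.

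The main obstacle is the precise form of the Demeter--Wu bilinear refined decoupling inequality. We expect it to be stated with $L^4$ of the geometric mean and a gain involving the multiplicity of tubes through cubes. The bookkeeping must make the exponents of $M_j$ and $|\W_j|$ match exactly; in particular, the bilinear exponent $1/4$ has to align with the $L^2$ Cauchy--Schwarz step for $w$. If the exponents fail to match, we would instead interpolate between that inequality and the trivial bound from \Cref{lem:trivialestimate}, or carry out the pigeonholing separately for each $\theta$-direction.
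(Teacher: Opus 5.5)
Your outline follows the paper's proof: square wave packets, pigeonholing $\|f_T\|_2\sim\nu_j$, sorting $R^{1/2}$-balls by the multiplicities $M_j$, and Cauchy--Schwarz to split off $\|w\|_{L^2(X)}$. It then uses the Demeter--Wu inequality and the double count $M_j\|w\|_{L^2(X)}^2\lesssim\sum_{T\in\W_j}\int_{\tilde T}w^2$, where $\tilde T$ is an enlargement of $2T$.

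The exponents you were unsure about do match. The inequality reads $\int_X|f_1f_2|^2\lessapprox\prod_j\bigl(M_j\sum_{T\in\W_j}\|f_T\|_4^4\bigr)^{1/2}$. Combine it with Bernstein, $\|f_T\|_4^4\lesssim|T|^{-1}\|f_T\|_2^4$, and set $S_j:=\sum_{T\in\W_j}\|f_T\|_2^2$. This gives $\|f_1f_2\|_{L^2(X)}\lessapprox\prod_j(M_j/|\W_j|)^{1/4}|T|^{-1/4}S_j^{1/2}$. Note this is degree one in each $f_j$, not degree $1/2$ as in your intermediate bound. Multiplying by your estimate $\|w\|_{L^2(X)}\lessapprox\prod_j(M_j^{-1}|\W_j||T|\lambda_j^2)^{1/4}$ gives exactly $\prod_j(S_j\lambda_j^2)^{1/4}S_j^{1/4}$, so no interpolation is needed. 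Your extra pigeonholing in $\lambda_j$ and in packets per $\theta$ is harmless but unnecessary. The paper instead moves $\nu_j^2\sim\|f_T\|_2^2$ inside the sum over $T$. No rescaling is needed either, since $|c_j|\ge 1/2$ and $\Delta_j\le 1/4$ already put $\sigma_1,\sigma_2$ inside transverse squares.

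The step that fails as written is the global pigeonholing. On $\R^3$ the collections $\T_j$ are infinite. Infinitely many packets with $\|f_T\|_2\le R^{-1000}\max\|f_T\|_2$ can carry most of $\|f_{\sigma_j}\|_2^2$. In that case \Cref{lem:trivialestimate} bounds the discarded part only with a loss of $R$. Keeping all dyadic classes instead costs the square root of an unbounded number of classes when you recombine. Also, the Demeter--Wu inequality is stated only for $X\subset B_R$.

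The missing step is the localization the paper performs first:
\begin{itemize}
\item Cover $\R^3$ by $R$-balls $B$ and keep $\T_j(B)=\{T\in\T_j:2T\cap B\neq\emptyset\}$.
\item Dispose of $f_{\sigma_j}-f_{j,B}$ on $B$ using \Cref{lem:trivialestimate} and the rapid decay \eqref{eqn:rapDec}.
\item Do all pigeonholing inside $B$. There $\#\T_j(B)\lesssim R^3$, so discarding packets below $R^{-1000}\bigl(\sum_{T\in\T_j(B)}\|f_T\|_2^2\bigr)^{1/2}$ is negligible and only $O(\log R)$ classes remain.
\item Sum the local bounds over $B$ by H\"older, using that each $T$ lies in $\T_j(B)$ for only $R^{O(\eps_0)}$ balls $B$.
\end{itemize}
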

The proof of \Cref{prop:bilinear-separated} is deferred to \Cref{sec:bilinear}.

\begin{proof}[Proof of \Cref{prop:bilinear}]
Recall that \(\tau_1,\tau_2\in\Theta_{\mathrm{square}}(K^{-2})\) for some dyadic $K\in [1,R^{1/3}]$. Then $c(\tau_j) \in (K^{-1}\Z \times K^{-1}\Z)\cap [-1,1)^2$ for $j=1,2$.
Write
\[
    d=c(\tau_2)-c(\tau_1) \in (K^{-1}\Z \times K^{-1}\Z).
\]
Let $\delta=(\delta_1,\delta_2)$ be a pair of dyadic numbers such that 
\[ |d_j|<\delta_j\leq2|d_j|,\qquad j=1,2. \]
We set
\[ \Delta=\delta^{-1}\circ (K^{-1},K^{-1}).\] 
Then, for $j=1,2$,  
\[\Delta_j=\delta_j^{-1}K^{-1} \leq 1/4\]
as $\delta_j^{-1}K^{-1}$ is dyadic and $\delta_j^{-1}K^{-1} < |d_j|^{-1}K^{-1} \leq 1/2$, where the second inequality follows from the assumption that $\tau_1,\tau_2$ are $K^{-1}$-transverse.

Define $g$ by 
\[ \hat{g}(\zeta) = \delta_1\delta_2 \hat{f}\bigl(\A_\tau(\zeta)\bigr)1_{[-1,1]^2\times \R}(\zeta), \;\; \text{ where } \; \tau = \theta(c(\tau_1), \delta).\]
Then 
\[ \supp \hat{g} \subset N_{R_0^{-1}}(H), \;\; \text{ where } \;\; R_0 = \delta_1\delta_2 R \]
by \Cref{lem:composition}. Moreover,  
\[ \tau_j \times \R = \A_\tau (\sigma_j \times \R),  \]
where
\[ \sigma_j=\theta\big(\delta^{-1}\circ( c(\tau_j)-c(\tau_1)), \Delta \big).\]

Let 
\[ c:=c(\sigma_2) = \delta^{-1}\circ d \in \Delta_1\Z \times \Delta_2 \Z,\] 
so that 
\[ \sigma_1 = \theta(0,\Delta), \; \text{ and } \; \sigma_2 = \theta(c,\Delta).\]
Since $\delta_j\leq 2|d_j|$,
\[ |c_j| \geq 1/2, \;\;\; j=1,2. \]
Clearly, $\sigma_1 \in \Theta_{\Delta}$. Since $\delta_j>|d_j|$, we have  $c \in (-1,1)^2$, thus $\sigma_2 \in \Theta_{\Delta}$. 
In particular, $\sigma_j \subset [-1,1]^2$ for each $j=1,2$, which yields
\begin{align*}
\widehat{g_{\sigma_j}}(\zeta)
    =
   \widehat{g}(\zeta)  1_{\sigma_j \times \R}(\zeta) 
    = \delta_1\delta_2  \widehat{f}(\mathcal A_\tau(\zeta))  1_{\tau_j \times \R}\bigl(
    \A_\tau(\zeta)\bigr) = \delta_1\delta_2  \widehat{f_{\tau_j}}(\mathcal A_\tau(\zeta)). 
\end{align*}
Plancherel's theorem gives
\begin{equation}\label{eq:bilinear-affine-L2}
    \|f_{\tau_j}\|_2=
     \|g_{\sigma_j}\|_2.
\end{equation}
Moreover, by Fourier inversion, 
\[
    |f_{\tau_j}(x)|
    =
    \delta_1\delta_2 |g_{\sigma_j}(L_\tau^{-1} x)|.
\] 
Thus, a change of variables yields
\begin{equation}\label{eq:bilinear-affine-integral}
    \int_{\mathbb R^3}|f_{\tau_1}f_{\tau_2}|w
    =
    \int_{\mathbb R^3}|g_{\sigma_1}g_{\sigma_2}|\widetilde w, \text{ where }  \widetilde w:=w\circ L_\tau.
\end{equation}

We now apply \Cref{prop:bilinear-separated} to $g_{\sigma_1}$ and $g_{\sigma_2}$ at scale $R_0=\delta_1\delta_2R$. The only remaining hypothesis to check is $\Delta_j \geq R_0^{-1/2}$, or equivalently, $\delta_j^{-2}K^{-2} \delta_1\delta_2 R \geq 1$. Since $\delta_j \in (2K^{-1}, 2]$, we have
\[ \delta_j^{-2} K^{-2} \delta_1\delta_2 R \geq K^{-3} R \geq 1. \]
In the second inequality, we used the assumption that $K\leq R^{1/3}$. 
Thus, by \Cref{prop:bilinear-separated},  
\[
\begin{split}
\int_{\mathbb R^3}|g_{\sigma_1}g_{\sigma_2}|\widetilde w
\lesssim_\eps{}&
R_0^\eps
\prod_{j=1}^2
\Bigg(
\sum_{\substack{
\theta'\in\Theta_{\mathrm{square}}(R_0^{-1}) \\
\theta' \subset \sigma_j}}
\sum_{T'\in\mathbb T_{\theta'}}
\|g_{T'}\|_2^2
\|\widetilde w\|_{L^2_{\mathrm{avg}}(T')}^2
\Bigg)^{1/4}
\|g_{\sigma_j}\|_2^{1/2}.
\end{split}
\]
By applying \eqref{eq:weight-scaling}
(with $K_1K_2=(\delta_1\delta_2)^{-1}$),
\eqref{eq:bilinear-affine-L2} and
\eqref{eq:bilinear-affine-integral}, we obtain
\[
\begin{split}
\int_{\mathbb R^3}|f_{\tau_1}f_{\tau_2}|w
\lesssim_\eps{}&
R^\eps
\prod_{j=1}^2
\Bigg(
\sum_{\substack{
\theta\in\Theta(R^{-1}) \\ \theta\subset \tau_j
}}
\sum_{T\in\mathbb T_\theta}
\|f_T\|_2^2
\|w\|_{L^2_{\mathrm{avg}}(T)}^2
\Bigg)^{1/4}
\|f_{\tau_j}\|_2^{1/2}.
\end{split}
\] This proves \Cref{prop:bilinear}.

\end{proof}
Thus, \Cref{thm:weightedL2refined}, and hence \Cref{thm:weightedL2}, follows subject to the proof of \Cref{prop:bilinear-separated}.

\section{Proof of the bilinear estimate: \Cref{prop:bilinear-separated}}\label{sec:bilinear}
It remains to prove \Cref{prop:bilinear-separated}. 
We recall that functions $f$ satisfying $\supp\widehat f\subset N_{R^{-1}}(H)$ admit the wave packet decomposition 
\[ f = \sum_{\theta \in \Theta_{\text{square}}(R^{-1})} f_\theta = \sum_{\theta \in \Theta_{\text{square}}(R^{-1})} \sum_{T\in \T_\theta} f_T, \]
where $f_T$ is a wave packet essentially localized to $T$ in physical space and to $\bar{\theta}$ in  frequency space. 
Here
\(\mathbb T_\theta =\{L_\theta(k/8+Q_1):k\in\mathbb Z^3\}\)
is a collection of parallelepipeds covering $\mathbb R^3$
with $O(1)$ overlap. Each $T\in\mathbb T_\theta$ has dimensions
$R^{1/2}\times R^{1/2}\times R$. 
More generally, below we call a function $f_T$, indexed by $T\in\T_\theta$, a scale-$R$ wave packet if it is essentially localized to $T$ in physical space and to a fixed enlargement of $\bar{\theta}$ in frequency space.

We use a bilinear refined decoupling estimate for the hyperbolic paraboloid of Demeter and Wu \cite[Theorem 1.10]{DW}. Recall that, for $T\in \T_\theta$, we denote by $2T$ the concentric $R^{\eps_0}$-dilate of $T$ for small $\eps_0$ relative to the underlying $\eps>0$. For $\T \subset \bigcup_{\theta\in \Theta_{\text{square}}(R^{-1})} \T_\theta$ and a ball $Q$ of radius $R^{1/2}$, we let 
\[ M(Q,\T) = \# \{ T\in \T: 2T \cap Q \neq \emptyset \}. \]
In the following statement, we call a pair of squares $\Omega_1,\Omega_2 \subset [-1,1]^2$ \emph{transverse} if 
\[ |\xi_1-\xi_2|\sim |\eta_1-\eta_2| \sim 1, \; \text{ for all } \; (\xi_j,\eta_j)\in \Omega_j. \]

\begin{theorem}[Bilinear refined decoupling]
\label{lem:bilinear-refined-decoupling}
Let \(\Omega_1,\Omega_2 \subset [-1,1]^2\) be transverse squares.
For \(j=1,2\), let
\[
    f_j=\sum_{T\in\mathbb W_j}f_T
\]
be a sum of scale-\(R\) wave packets, where 
\[\W_j \subset \bigcup_{\substack{\theta \in \Theta_{\mathrm{square}}(R^{-1}) \\ \theta \subset \Omega_j}} \T_\theta.\]
Let $B_R$ denote a ball of radius $R$, and let $X\subset B_R$ be a union of pairwise disjoint
 balls \(Q\) of radius $R^{1/2}$.
Then, 
\begin{equation}\label{eq:bilinear-refined-decoupling}
 \int_X|f_1f_2|^2
 \less
 \prod_{j=1}^2
 \bigg( M_j
    \sum_{T\in\mathbb W_j}
    \|f_T\|_{L^4(\mathbb R^3)}^4
 \bigg)^{1/2},
\end{equation}
where  
\[ M_j = \max_{Q\subset X} M(Q,\W_j),\qquad j=1,2. \]
Here $A \less B$ denotes $A\leq C_\eps R^{\eps} B$  for some constant $C_\eps>0$ for any $\eps>0$.
\end{theorem}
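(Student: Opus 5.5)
The plan is a multi-scale induction in the spirit of the refined decoupling argument of \cite{GIOW}, with the linear $\ell^2$ decoupling step replaced by the bilinear $\ell^2$ decoupling for $H$ proved by Demeter and Wu \cite{DW}. Since the result is quoted from \cite[Theorem 1.10]{DW}, I would mainly check that the present normalization (the caps $\bar\theta$, the tubes $T_\theta$, and the $R^{\eps_0}$-dilates $2T$ used in $M(Q,\T)$) matches theirs. The bilinear decoupling I have in mind is the following. For transverse squares $\Omega_1,\Omega_2$ and functions $F_j$ with Fourier support in $N_{\rho^{-1}}(H_{\Omega_j})$,
\[
\int_{B_\rho}|F_1F_2|^2\less \prod_{j=1}^2\Big(\sum_{\theta\in\Theta_{\mathrm{square}}(\rho^{-1}),\,\theta\subset\Omega_j}\|F_{j,\theta}\|_{L^4(w_{B_\rho})}^2\Big).
\]
The transversality hypothesis is exactly what removes the line-segment obstruction, which is why only square caps are needed here.

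\textbf{Step 1 (one scale).} Fix a ball $Q\subset X$ of radius $R^{1/2}$. At this spatial scale the frequency uncertainty is $R^{-1/2}$, so I would apply the bilinear decoupling above at scale $\rho=R^{1/2}$. This splits $f_j$ into pieces $f_{j,\tau}$ over squares $\tau$ of side $R^{-1/4}$.
\begin{itemize}
\item Only wave packets with $2T\cap Q\neq\emptyset$ contribute to $f_j$ on $Q$, up to $R^{-1000}$ tails. These tails are handled by \eqref{eqn:rapDec}.
\item Group the contributing $T\in\W_j$ by $\tau$. Let $M_{j,\tau}(Q)$ be the number of such $T$ with $\theta\subset\tau$.
\end{itemize}

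\textbf{Step 2 (one $\tau$).} I would control $\|f_{j,\tau}\|_{L^4(Q)}$ by a parabolic rescaling attached to $\tau$, in the form of \Cref{lem:composition}. After rescaling, $Q$ becomes a tube of scale $R^{1/2}$ and the scale-$R$ wave packets inside $\tau$ become scale-$R^{1/2}$ wave packets. One then proceeds by induction on scales, exactly as in \cite{GIOW}. Cauchy--Schwarz over the at most $M_j$ contributing $\tau$ (and then $\theta$) turns $\ell^2$ sums into $M_j^{1/2}\big(\sum\|\cdot\|_4^4\big)^{1/2}$. Finally, summing over the disjoint balls $Q$ uses
\[
\sum_Q a_{1,Q}^{1/2}a_{2,Q}^{1/2}\le\prod_j\Big(\sum_Q a_{j,Q}\Big)^{1/2}
\quad\text{and}\quad
\sum_Q\|f_T\|_{L^4(w_Q)}^4\lesssim\|f_T\|_4^4 .
\]
Together these give \eqref{eq:bilinear-refined-decoupling}.

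\textbf{Main obstacle.} The hard step is Step 2. Once a single $\tau$ is rescaled, the bilinear structure is lost. The linear refined decoupling into squares fails for $H$, because the rescaled problem is again a hyperbolic paraboloid with no transversality. So a naive induction does not close.

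The fix I would try first is to keep the induction bilinear. Pair $f_{1,\tau_1}$ with $f_{2,\tau_2}$ on the tubes adapted to $\tau_1$ and $\tau_2$. Note that $\tau_1\subset\Omega_1$ and $\tau_2\subset\Omega_2$ remain transverse, and a single affine map $\mathcal A$ (as in the proof of \Cref{prop:bilinear}) rescales both simultaneously to a transverse configuration at scale $R^{1/2}$. I would then iterate $O(\eps^{-1})$ times over scales $R^{1/2^k}$, accumulating $C_\eps^{O(1/\eps)}R^{O(\eps)}$ losses. The secondary technical point is the tube-counting bookkeeping: one must check that the multiplicities at intermediate scales are dominated by the counts $M(Q,\W_j)$ at the finest scale. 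I expect this to follow from the incidence relation between $Q$, the intermediate tubes, and the dilates $2T$, and this is exactly where the $R^{\eps_0}$-enlargement in $M(Q,\T)$ is used.
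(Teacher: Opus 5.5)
The paper does not prove this statement. It quotes it from \cite[Theorem~1.10]{DW} and uses it as a black box in the proof of \Cref{prop:bilinear-separated}. Your opening remark (cite it and check that the caps, the tubes $T_\theta$ and the $R^{\eps_0}$-dilates in $M(Q,\cdot)$ match) is therefore all the paper does. The self-contained induction you sketch has a genuine gap, however, and your proposed fix does not repair it.

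\textbf{First issue: splitting the two factors.} On each $Q$, the bilinear decoupling you quote outputs $\prod_j\sum_{\tau}\|f_{j,\tau}\|_{L^4(w_Q)}^2$, which is a product of \emph{linear} norms. Since $\int|f_{1,\tau_1}f_{2,\tau_2}|^2w_Q\le\|f_{1,\tau_1}\|_{L^4(w_Q)}^2\|f_{2,\tau_2}\|_{L^4(w_Q)}^2$, a bilinear hypothesis for the pair $(\tau_1,\tau_2)$ controls the smaller quantity, not the one that actually appears. So ``pairing'' cannot restore bilinearity. Your Step 2 then needs, for each $j$ (take $f_2$ to be a single wave packet, say),
\[
\sum_{Q}\Big(\sum_\tau\|f_{j,\tau}\|_{L^4(w_Q)}^2\Big)^2\lesssim R^{\eps}M_j\sum_{T\in\W_j}\|f_T\|_4^4 .
\]
This inequality is false. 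Take $\W_1$ inside a single $\tau$, with Fourier support in a translate (into $\Omega_1$) of the flat rectangle $[0,R^{-1/4}]\times[0,R^{-3/4}]$, which meets $R^{1/4}$ caps $\theta$. For each such $\theta$, take about $R^{1/2}$ packets of amplitude $\sim1$ filling a common slab of width $R^{1/2}$. Align the phases so that $|f_1|\sim R^{1/4}$ on a sub-slab of width $R^{1/4}$, and let $X$ cover that sub-slab. Then $M_1\sim R^{1/4}$ and the right side is $\sim R^{3}$, while the left side is $\gtrsim R^{13/4}$. Any scheme that separates the two factors at an intermediate scale and treats them linearly loses a power of $R$. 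The transverse interaction has to be kept across scales.

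\textbf{Second issue: the simultaneous rescaling.} The claim that one affine map rescales $\tau_1,\tau_2$ simultaneously to a transverse configuration at scale $R^{1/2}$ is false. By \Cref{lem:composition}, pulling back by $\A_{c,\delta}$ changes the scale from $R$ to $\delta_1\delta_2R$, and it acts on the first two coordinates by $(\xi,\eta)\mapsto((\xi-c_1)/\delta_1,(\eta-c_2)/\delta_2)$. Since $\tau_1,\tau_2$ are $\sim1$ apart in each coordinate, both images lie in $[-1,1]^2$ only if $\delta_1,\delta_2\gtrsim1$, so the scale stays $\sim R$. Reaching scale $R^{1/2}$ forces $\delta_1\delta_2=R^{-1/2}$, which pushes the images at least $R^{1/4}$ apart. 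Every affine symmetry of $H$ acts diagonally or antidiagonally on $(\xi,\eta)$, so no other choice of map helps. This is the same computation as $R_0=\delta_1\delta_2R$ in the proof of \Cref{prop:bilinear}, where the gain comes only from the separation $\sim K^{-1}$, not from the size of the squares.

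Hence there is no iteration over the scales $R^{1/2^k}$ in the form you describe, and the multiplicity bookkeeping you defer never arises. As written, your proposal establishes the theorem only through the citation. A self-contained proof would need a genuinely different intermediate-scale mechanism, and that is exactly what \cite[Theorem~1.10]{DW} provides.
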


Now we begin the proof of \Cref{prop:bilinear-separated}.
For $j=1,2$, take a scale-$R$ wave packet decomposition
\[
    f_{\sigma_j}
    =\sum_{\substack{
        \theta\in\Theta_{\mathrm{square}}(R^{-1})\\
        \theta\subset\sigma_j}}
      \sum_{T\in\T_\theta}f_T=\sum_{T\in\mathbb T_j}f_T, \; \text{ where }\; \T_j = \bigcup_{\substack{\theta \in \Theta_{\mathrm{square}}(R^{-1}) \\ \theta \subset \sigma_j}} \T_\theta.
\]
Thus, to prove \Cref{prop:bilinear-separated}, it suffices to show that 
\[
 \int_{\mathbb R^3}|f_{\sigma_1}f_{\sigma_2}|w
 \less 
 \prod_{j=1}^2 \bigg( 
 \sum_{T\in \T_j}  \| f_T\|_{L^2}^2
     \| w\|_{L^2_{\mathrm{avg}}(T)}^2 \bigg)^{1/4} \bigg( 
 \sum_{T\in \T_j}  \| f_T\|_{L^2}^2
      \bigg)^{1/4}
\]
since, by \eqref{eqn:orthogonalWavePacket} and Plancherel's theorem,
\[
    \sum_{T\in\mathbb T_j}\|f_T\|_2^2
    \sim
    \sum_{\substack{
        \theta\in\Theta_{\mathrm{square}}(R^{-1})\\
        \theta\subset\sigma_j}}
    \|f_\theta\|_2^2
    =\|f_{\sigma_j}\|_2^2.
\]

Cover $\mathbb R^3$ by a collection of balls $B$ of radius $R$ with bounded overlap. For each $B$, we set
\[
    \mathbb T_j(B)=\{T\in\mathbb T_j:2T\cap B\neq\emptyset\},
    \qquad
    f_{j,B}=\sum_{T\in\mathbb T_j(B)}f_T.
\]
Then 
\[ 
       \int_{\mathbb R^3}|f_{\sigma_1}f_{\sigma_2}|w \les \sum_B \Bigg( \int_{B}|f_{1,B}f_{2,B}|w +  \int_{B}|(f_{\sigma_1}-f_{1,B})f_{\sigma_2}|w + \int_{B}|f_{1,B}(f_{\sigma_2}-f_{2,B})|w \Bigg)
\]
by the identity
\[
    f_{\sigma_1}f_{\sigma_2}=f_{1,B}f_{2,B}+(f_{\sigma_1}-f_{1,B})f_{\sigma_2}
      +f_{1,B}(f_{\sigma_2}-f_{2,B}).
\]

The terms involving $f_{\sigma_1}-f_{1,B}$ and $f_{\sigma_2}-f_{2,B}$ are error terms and can be treated by trivial estimates. We verify this for the term with $f_{\sigma_1}-f_{1,B}$. By the Cauchy--Schwarz inequality,
\[     \sum_B\int_B
    |(f_{\sigma_1}-f_{1,B})f_{\sigma_2}|w \les \bigg(\sum_B\int_B
    |f_{\sigma_1}-f_{1,B}|^2 w \bigg)^{1/2} \bigg(\int_{\R^3} 
    |f_{\sigma_2}|^2 w \bigg)^{1/2}.\]
Since $f_{\sigma_1}-f_{1,B} = \sum_{T\in \T_1:\, 2T\cap B = \emptyset } f_T$,  \Cref{lem:trivialestimate} and \eqref{eqn:rapDec} yield 
\begin{align*}
    \sum_B\int_B
    |f_{\sigma_1}-f_{1,B}|^2w
    &\lesssim_\eps R^{-2002}
    \sum_{T\in \T_1}\|f_T\|_2^2\frac{1}{|T|}\int w\chi_T \\
    &\lesssim_\eps R^{-2002}
    \left(
        \sum_{T\in\mathbb T_1}
        \|f_T\|_2^2\|w\|_{L^2_{\mathrm{avg}}(T)}^2
    \right)^{1/2}\bigg(\sum_{T\in\mathbb T_1} \|f_T\|_2^2\bigg)^{1/2}.
\end{align*}
Together with the trivial bound for $\int_{\R^3} |f_{\sigma_2}|^2 w$ which follows from \Cref{lem:trivialestimate} and \eqref{eqn:trivialBound}, we obtain
\begin{align*}
    \sum_B\int_B
    |(f_{\sigma_1}-f_{1,B})f_{\sigma_2}|w  
    \lesssim_\eps R^{-1000}  \prod_{j=1}^2 \bigg( 
 \sum_{T\in \T_j}  \| f_T\|_{L^2}^2
     \| w\|_{L^2_{\mathrm{avg}}(T)}^2 \bigg)^{1/4} \bigg( 
 \sum_{T\in \T_j}  \| f_T\|_{L^2}^2
      \bigg)^{1/4}.
\end{align*}

Thus, it remains to control the main term involving $f_{1,B} f_{2,B}$. We claim that 
\begin{equation}\label{eq:separated-local-estimate}
    \int_B|f_{1,B}f_{2,B}|w
    \less\prod_{j=1}^2
    \Bigg(\sum_{T\in\mathbb T_j(B)}
        \|f_T\|_2^2\|w\|_{L^2_{\mathrm{avg}}(T)}^2
    \Bigg)^{1/4}  \Bigg(\sum_{T\in\mathbb T_j(B)}\|f_T\|_2^2\Bigg)^{1/4}.
\end{equation}
Applying Cauchy-Schwarz to the sum over $B$ in
\eqref{eq:separated-local-estimate}, we obtain
\[
    \sum_B\int_B|f_{1,B}f_{2,B}|w
    \less
    \prod_{j=1}^2
   \Bigg(\sum_{T\in\mathbb T_j}
        \|f_T\|_2^2\|w\|_{L^2_{\mathrm{avg}}(T)}^2
    \Bigg)^{1/4}  \Bigg(\sum_{T\in\mathbb T_j}\|f_T\|_2^2\Bigg)^{1/4}.
\]
where we used the fact that each $T\in \T_j$ belongs to $\T_j(B)$ for $O(R^{\eps_0}) \less 1$ balls $B$ in the covering. Thus, we have reduced the proof of \Cref{prop:bilinear-separated} to the local estimate \eqref{eq:separated-local-estimate}. 

Fix $B$. Since $\#\mathbb T_j(B)\lesssim R^3$, the contribution of packets
$T\in\mathbb T_j(B)$ for which
\[
    \|f_T\|_2
    \leq R^{-1000}
    \Bigg(\sum_{T\in\mathbb T_j(B)}\|f_T\|_2^2\Bigg)^{1/2}
\]
can be treated by \Cref{lem:trivialestimate}
and Cauchy--Schwarz. Thus, after dyadic pigeonholing and absorbing the resulting logarithmic losses into \(R^\eps\), it suffices to
consider a subcollection $\W_j \subset \mathbb T_j(B)$, satisfying
\[
    \|f_T\|_2\sim\nu_j \quad\text{for every }T\in\W_j
\]
for some dyadic $\nu_j>0$. Set
\[
    f_j=\sum_{T\in\W_j} f_T.
\]
Then it suffices to verify the local estimate \eqref{eq:separated-local-estimate} with $f_{j,B}$ replaced by $f_j$. 
We next use dyadic pigeonholing to reduce the local estimate on $B$ to an estimate on a union $X$ of balls of radius $R^{1/2}$, to which the bilinear refined decoupling theorem applies.

Cover $B$ by balls $Q$ of radius $R^{1/2}$ with bounded overlap, and
divide the covering into $O(1)$ collections of pairwise disjoint
balls. For every ball $Q$, let
\[
    m_j(Q):=M(Q,\W_j),
    \qquad w^2(Q):=\int_Qw^2.
\]
The contribution of the balls on which $m_1(Q)=0$
or $m_2(Q)=0$ is negligible. Indeed, let $X$ denote the union of balls $Q$ such that $m_j(Q)=0$ for a fixed $j$. Then every tube $T\in \W_j$ satisfies
$2T\cap X=\emptyset$. Thus,  by Cauchy-Schwarz, \Cref{lem:trivialestimate}, \eqref{eqn:trivialBound}, and \eqref{eqn:rapDec}, we have
\[
    \int_X |f_1f_2| w
    \lesssim_\eps R^{-1000}
    \prod_{j=1}^2
    \Bigg(\sum_{T\in\mathbb T_j(B)}
        \|f_T\|_2^2\|w\|_{L^2_{\mathrm{avg}}(T)}^2
    \Bigg)^{1/4}\Bigg(\sum_{T\in\mathbb T_j(B)}\|f_T\|_2^2\Bigg)^{1/4}.
\]

It remains to consider balls with $m_j(Q)\geq1$ for each $j=1,2$.
By dyadic pigeonholing in $m_1(Q)$ and $m_2(Q)$,  it suffices to consider a collection $\mathcal Q$
such that
\[
    m_j(Q)\sim M_j,\qquad j=1,2,
\]
for every $Q\in\mathcal Q$, where $M_1,M_2\geq1$ are fixed. Set
\[
    X:=\bigcup_{Q\in\mathcal Q}Q.
\]
Then by Cauchy--Schwarz,
\[
    \int_X|f_1f_2|w
    \leq
    \Bigg(\sum_{Q\in\mathcal Q}w^2(Q)\Bigg)^{1/2}
    \left(\int_X|f_1f_2|^2\right)^{1/2}.
\]
Next, we apply \Cref{lem:bilinear-refined-decoupling}. By the assumption on $\sigma_j$, there exist transverse squares $\Omega_1, \Omega_2 \subset [-1,1]^2$ such that $\Omega_j\supset \sigma_j$. By \Cref{lem:bilinear-refined-decoupling}, 
\begin{equation}\label{eq:separated-before-incidence}
    \int_X|f_1f_2|w
    \less
    \prod_{j=1}^2
    \Bigg(
        \sum_{Q\in\mathcal Q}w^2(Q) M_j
        \sum_{T\in\W_j}\|f_T\|_{L^4}^4
    \Bigg)^{1/4}.
\end{equation}

The Fourier support of $f_T$ is contained in a fixed enlargement of
a cap of dimensions $R^{-1/2}\times R^{-1/2}\times R^{-1}$. Therefore, by Bernstein's inequality,
\[
    \|f_T\|_4^4\lesssim |T|^{-1}\|f_T\|_2^4.
\]
By the choice of $\W_j$, it follows that
\[
    \sum_{T\in\W_j}\|f_T\|_4^4
    \lesssim |T|^{-1} \nu_j^2  \sum_{T\in \W_j}  \|f_T\|_2^2.
\]
For $j=1,2$, one has
\[
\sum_{Q\in\mathcal Q}w^2(Q)M_j\sim \sum_{Q\in\mathcal Q}w^2(Q)m_j(Q)=\sum_{T\in\W_j}
    \sum_{\substack{Q\in\mathcal Q\\2T\cap Q\neq\emptyset}} w^2(Q).
\]
Therefore, combining the above estimates, we obtain
\[ \sum_{Q\in\mathcal Q}w^2(Q)M_j  \sum_{T\in\W_j}\|f_T\|_4^4
\les   \Bigg( \sum_{T\in\W_j}
    \|f_T\|_2^2 |T|^{-1} \sum_{\substack{Q\in\mathcal Q\\2T\cap Q\neq\emptyset}} w^2(Q) \Bigg)\Bigg( \sum_{T\in \W_j}  \|f_T\|_2^2 \Bigg).\]

For each $T\in\W_j$,   balls $Q\in \mathcal Q$ intersecting $2T$ are contained in a fixed concentric enlargement $\tilde T$
of $2T$.  Thus, 
\[
|T|^{-1} \sum_{\substack{Q\in\mathcal Q\\2T\cap Q\neq\emptyset}} w^2(Q) \leq |T|^{-1}\int_{\tilde T} w^2 \less \|w\|_{L^2_{\mathrm{avg}}(T)}^2, \]
where we used $1_{\tilde T}\less \chi_T$. 
Substituting these estimates into \eqref{eq:separated-before-incidence}, we obtain
\[ 
    \int_X|f_1f_2|w
    \less
    \prod_{j=1}^2
      \Bigg( \sum_{T\in\W_j}
    \|f_T\|_2^2 \|w\|_{L^2_{\mathrm{avg}}(T)}^2 \Bigg)^{1/4}\Bigg( \sum_{T\in\mathbb \W_j}  \|f_T\|_2^2 \Bigg)^{1/4}.
\]
This proves the local estimate \eqref{eq:separated-local-estimate} and  hence \Cref{prop:bilinear-separated}.

\section{Proof of \Cref{thm:MT0} and \Cref{thm:MT}}
\label{sec:extension-equivalence}

\subsection{Proof of \Cref{thm:MT}}
It suffices to prove \eqref{eq:extension-MTthm}, since \eqref{main:eq2thm} then follows from \Cref{prop:lPerp}. Moreover, a standard argument based on the uncertainty principle shows that \eqref{eq:extension-MTthm} and \eqref{eqn:equivL2} are equivalent. We include a proof of this equivalence for completeness.

\begin{proposition}\label{prop:extension-equivalence}
Estimates \eqref{eq:extension-MTthm}  and  \eqref{eqn:equivL2} are equivalent.
\end{proposition}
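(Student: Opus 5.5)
We prove the two implications separately. In both directions the link between $Eg$ and functions with Fourier support in $N_{R^{-1}}(H)$ is a smooth cutoff adapted to $B_R$. The main bookkeeping is the relation between the conditions ``$g_\theta\neq0$'' and ``$f_\theta\neq0$''. There is also a normalization factor $|T|\sim R^{\cdot}$ to track: \eqref{eq:extension-MTthm} has no $|T|^{-1}$, so the powers of $R$ must match.

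\emph{\eqref{eqn:equivL2} $\Rightarrow$ \eqref{eq:extension-MTthm}.} Fix a Schwartz function $\phi$ with $|\phi|\gtrsim 1$ on $B_1$ and $\supp\hat\phi\subset B(0,1)$. Put $\phi_R=\phi(\cdot/R)$ and $f=\phi_R\,Eg$. Then $\hat f=\widehat{\phi_R}*(g\,d\sigma)$, where $d\sigma$ is the pushforward of $d\xi\,d\eta$ to $H$. So $\hat f$ lives in an $O(R^{-1})$-neighbourhood of $H$ and, horizontally, in an $R^{-1}$-neighbourhood of $\supp g$. After rescaling $R$ by a constant (harmless), we may treat $\supp \hat f\subset N_{R^{-1}}(H)$ up to an enlargement of $[-1,1]^2$. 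This enlargement can be absorbed by a finite decomposition and the affine maps of \Cref{lem:composition}.

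Next we estimate $\|f\|_2^2$. Write $\hat f(\zeta',s)=\int \widehat{\phi_R}(\zeta'-\xi,\ s-\xi\eta)\,g(\xi)\,d\xi$ with $\zeta'\in\R^2$. By Cauchy--Schwarz/Young, $\|f\|_2^2\lesssim R\|g\|_2^2$. (Directly: $\int_{\R^3} |\phi_R|^2|Eg|^2$, and slicing in $t$ with Plancherel in $x$ gives $\int |\phi_R|^2 |Eg|^2 \lesssim R\|g\|_2^2$.) The left sides compare as $\int_{B_R}|Eg|^2w\lesssim\int|f|^2w$.

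We now match the maximum. If $f_\theta\neq0$, then $\theta$ (after enlargement) meets the $R^{-1}$-neighbourhood of $\supp g$. We must deduce that a neighbouring cap $\theta'$ with $g_{\theta'}\neq0$ has $T_\theta\subset CT_{\theta'}$. This is \Cref{lem:inclusion}, applied with $d$ in the neighbourhood, together with neighbours in $\Theta_\delta$. Since $|T|=R$, this yields
\[
\int_{B_R}|Eg|^2w\lesssim R^\eps\,R^{-1/2}\sup(\cdots)\,R\|g\|_2^2 .
\]
Here an extra $R^{1/2}$ appears, so the normalization must be checked carefully. We expect that the right scaling in fact comes from $E$ being the $t$-slice at scale $R$, with $\|f\|_2^2\sim R\|g\|_2^2$ and $|T|=R\cdot$ area. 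In the paper's convention $|T_\theta|=\delta_1^{-1}\delta_2^{-1}R=R^2$. Then $R^{-1}\cdot R=1$, which matches \eqref{eq:extension-MTthm}.

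\emph{\eqref{eq:extension-MTthm} $\Rightarrow$ \eqref{eqn:equivL2}.} We reduce to extension operators by slicing in the $s$-variable. Write
\[
\hat f(\xi,\eta,s)=\int_{|\lambda|\le R^{-1}} h_\lambda(\xi,\eta)\,d\delta_{s=\xi\eta+\lambda}\,d\lambda ,
\qquad\text{so}\qquad
f=\int e^{2\pi i t\lambda}E h_\lambda\,d\lambda .
\]
Minkowski and Cauchy--Schwarz in $\lambda$ give
\[
\int|f|^2 w\le R^{-1}\int\!\!\int|Eh_\lambda|^2w\,d\lambda ,
\]
and $\int\|h_\lambda\|_2^2\,d\lambda=\|f\|_2^2$. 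The global integral over $\R^3$ is split into translates of $B_R$. Each translate is recentered by modulation/translation, which preserves the class of weights and translates of $T_\theta$. The sum over balls is handled by choosing weights $w1_{B}$; the supremum over $T$ is monotone in $w$, and summing requires the sum over $B$ of $\sup_T (\int_T w^2 1_B)^{1/2}$. This sum is the main obstacle. To handle it, we first localize $f$ by a partition of unity $f=\sum_B f\phi_B$ at scale $R$, using the rapid decay off $B$. We then apply the slicing only to $f\phi_B$ and sum $\|f\phi_B\|_2^2\lesssim\|f\|_2^2$, with the supremum bounded uniformly by the global supremum. The condition $h_{\lambda,\theta}\neq0$ forces $f_\theta\neq0$ up to neighbouring caps, and these are again handled by \Cref{lem:inclusion}. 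The normalization $R^{-1}\cdot|T|^{1/2}$ is consistent with the previous direction.
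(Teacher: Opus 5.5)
Your outline is sound, and your converse direction, \eqref{eq:extension-MTthm}$\Rightarrow$\eqref{eqn:equivL2}, is the paper's argument: slice $g_s(\xi,\eta)=\widehat F(\xi,\eta,\xi\eta+s)$, apply Cauchy--Schwarz over $|s|\le R^{-1}$, and use $\int\|g_s\|_2^2\,ds=\|F\|_2^2$ and $|T|\sim R^2$. The order differs. The paper slices the unlocalized $F$ and applies \eqref{eq:extension-MTthm} on $B_R$, where $(g_s)_\theta\neq0\Rightarrow F_\theta\neq0$ holds exactly. It then globalizes by referring back to the wave-packet localization of \Cref{sec:bilinear}.

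You localize first to $f\phi_B$. That is fine for the summation. On a ball $B'$ the far terms satisfy $\sum_{B\text{ far}}|\phi_B|\le\frac12$, so $|f|\le 2\sum_{B\text{ near}}|f\phi_B|$ there, and $\sum_B|\phi_B|^2\lesssim1$. But the slices of $\widehat{f\phi_B}$ are no longer supported in $[-1,1]^2$, so you hit the same spill-over issue described below. The paper's globalization faces it too, since its wave packets have Fourier support in the enlarged caps $\mathcal A_\theta(Q_4)$, and leaves it implicit.

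The forward direction, which is the one the paper actually needs, is where you genuinely differ, and the difference is costly. You cut off in all three variables to get an $L^2(\R^3)$ function. Only $x_3$ needs a cutoff, because $\|Eg(\cdot,t)\|_{L^2(\R^2)}=\|g\|_2$ for each $t$. The paper sets $\widehat F(\xi,\eta,\tau)=R\psi(R(\tau-\xi\eta))g(\xi,\eta)$ with $\psi\in C_c^\infty((-1,1))$ and $|\check\psi|\gtrsim1$ on $[-1,1]$, so that $F(x)=\check\psi(x_3/R)Eg(x)$. This gives three exact facts:
\begin{itemize}
\item $\supp\widehat F\subset N_{R^{-1}}(H)$;
\item $F_\theta\neq0$ if and only if $g_\theta\neq0$;
\item $\|F\|_2^2=R\|\psi\|_2^2\|g\|_2^2$.
\end{itemize}
No neighbouring caps and no change of scale are needed.

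With $\phi(\cdot/R)$, the Fourier support becomes a vertical neighbourhood of half-width up to $4R^{-1}$ over $[-1-R^{-1},1+R^{-1}]^2$. Estimate \eqref{eqn:equivL2} does not apply there, and no caps are defined there. Saying this is ``absorbed by a finite decomposition and the affine maps'' is only an assertion, at exactly the point that needs an argument.

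A clean repair is the dilation $\widehat h(u,v,s)=\widehat f(2u,2v,4s)$, which puts $\supp\widehat h$ inside $N_{R^{-1}}(H)$. With $D=\mathrm{diag}(2,2,4)$ one has $f=16\,h\circ D$ and $D^{-1}T_{\tilde\theta}=T_{2\tilde\theta}$, where $T_{2\tilde\theta}$ is given by \eqref{eqn:defT} with $(c,\delta)$ replaced by $(2c,2\delta)$. So \eqref{eqn:equivL2} for $h$ gives the bound for $f$ with doubled caps of area $4R^{-1}$. You would then still need to cover each relevant $T_{2\tilde\theta}$ by $O(1)$ translates of some $T_\theta$ with $\theta\in\Theta(R^{-1})$ and $g_\theta\neq0$. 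Your neighbouring-cap argument together with \Cref{lem:inclusion} does this. It is all routine, but entirely avoidable with the paper's cutoff.

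Finally, there was never an extra factor $R^{1/2}$. Every $\theta\in\Theta(R^{-1})$ has $|T_\theta|\sim\delta_1^{-1}\delta_2^{-1}R=R^2$, so $|T|^{-1/2}\|f\|_2^2\lesssim R^{-1}\cdot R\|g\|_2^2$ from the start.
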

\begin{proof}
We first verify the implication  \eqref{eqn:equivL2} $\implies$ 
\eqref{eq:extension-MTthm}. Choose
\(\psi\in C_c^\infty((-1,1))\) such that
\[
    |\check\psi(t)|\gtrsim1
    \qquad\text{whenever }|t|\leq1.
\]
Given \(g\in L^2([-1,1]^2)\), define \(F\) by
\[
    \widehat F(\xi,\eta,\tau)
    :=
    R\psi\bigl(R(\tau-\xi\eta)\bigr)g(\xi,\eta).
\]
Since \(\supp\psi\subset(-1,1)\), we have
\(\supp\widehat F\subset N_{R^{-1}}(H)\). Fourier inversion in the
\(\tau\)-variable gives
\[
    F(x)
    =
    \check\psi(x_3/R)Eg(x).
\]
For \(x\in B_R\), we have \(|x_3/R|\leq1\), and hence
\[
    |Eg(x)|\lesssim|F(x)|.
\]
Plancherel's theorem and the change of variables
\(s=R(\tau-\xi\eta)\) give
\[
\begin{aligned}
 \|F\|_2^2=\int
   R^2\left|\psi\bigl(R(\tau-\xi\eta)\bigr)\right|^2
   |g(\xi,\eta)|^2\,d\xi\,d\eta d\tau =R\|\psi\|_2^2\|g\|_2^2.
\end{aligned}
\]
Applying \eqref{eqn:equivL2} to \(F\), we obtain
\[
\begin{split}
\int_{B_R} |Eg|^2w \les \int |F|^2w \lesssim_\eps
 R^\eps
 \max_{\substack{
       \theta\in\Theta(R^{-1})\\
       F_\theta\neq0}}
 \sup_{T\parallel T_\theta}
 \left(
    \frac1{|T|}\int_Tw^2
 \right)^{1/2}
 \|F\|_2^2.
\end{split}
\]
Note that the condition \(F_\theta\neq0\) implies \(g_\theta\neq0\). Additionally, $|T|^{-1/2} \| F\|_{2}^2 \sim \| g\|_{2}^2$ since $|T|\sim R^2$ and $\| \psi\|_2\sim 1$.
Therefore,
\[
 \max_{\substack{
       \theta\in\Theta(R^{-1})\\
       F_\theta\neq0}}
 \sup_{T\parallel T_\theta}
 \left(
    \frac1{|T|}\int_Tw^2
 \right)^{1/2} \| F\|_{2}^2
 \les
 \max_{\substack{
       \theta\in\Theta(R^{-1})\\
       g_\theta\neq0}}
 \sup_{T\parallel T_\theta}
 \left(
   \int_Tw^2
 \right)^{1/2} \| g\|_{2}^2.
\]
Consequently, we obtain \eqref{eq:extension-MTthm}.

Conversely, assume \eqref{eq:extension-MTthm}, and let \(F\) satisfy
\(\supp\widehat F\subset N_{R^{-1}}(H)\). For
\(|s|\leq R^{-1}\), define
\[
    g_s(\xi,\eta)
    :=
    \widehat F(\xi,\eta,\xi\eta+s).
\]
Fourier inversion gives
\[
    F(x)
    =
    \int_{|s|\leq R^{-1}}
    e^{2\pi isx_3}Eg_s(x)\,ds.
\]
By Cauchy--Schwarz in \(s\),
\[
    |F(x)|^2
    \lesssim
    R^{-1}
    \int_{|s|\leq R^{-1}}|Eg_s(x)|^2\,ds.
\]
Integrating against \(w\) and applying Fubini's theorem, we get
\[
    \int_{B_R}|F|^2w
    \lesssim
    R^{-1}
    \int_{|s|\leq R^{-1}}
    \int_{B_R}|Eg_s(x)|^2w(x)\,dx\,ds.
\]
We apply \eqref{eq:extension-MTthm} to each \(g_s\). We first compare
the frequency supports of \(g_s\) and \(F\). Let
\(\theta=\theta(c,(\delta_1,\delta_2))\in\Theta(R^{-1})\), and
suppose that \((g_s)_\theta\neq0\). Then there exists $(\xi,\eta) \in \theta$ such that 
\[
    \widehat F(\xi,\eta,\xi\eta+s)\neq0.
\]
Hence \(F_\theta\neq 0\).
Therefore,
\[
 \max_{\substack{
       \theta\in\Theta(R^{-1})\\
       (g_s)_\theta\neq0}}
 \sup_{T\parallel T_\theta}
 \left(\int_Tw^2\right)^{1/2}
 \leq
 \max_{\substack{
       \theta\in\Theta(R^{-1})\\
       F_\theta\neq0}}
 \sup_{T\parallel T_\theta}
 \left(\int_Tw^2\right)^{1/2}.
\]
Applying \eqref{eq:extension-MTthm} to each \(g_s\), and using the
preceding bound, we obtain
\[
\begin{split}
 \int_{B_R}|F|^2w
 \lesssim_\eps{}R^\eps
 R^{-1}
 \max_{\substack{
       \theta\in\Theta(R^{-1})\\
       F_\theta\neq0}}
 \sup_{T\parallel T_\theta}
 \left(
    \int_Tw^2
 \right)^{1/2}
 \int_{|s|\leq R^{-1}}\|g_s\|_2^2\,ds.
\end{split}
\]
By the definition of \(g_s\),
\[
\begin{aligned}
 \int_{|s|\leq R^{-1}}\|g_s\|_2^2\,ds
 &=
 \int_{|s|\leq R^{-1}}
 \int_{[-1,1]^2}
 \left|
    \widehat F(\xi,\eta,\xi\eta+s)
 \right|^2
 \,d\xi\,d\eta ds\\
 &=
 \int_{\substack{
       (\xi,\eta)\in[-1,1]^2\\
       |\tau-\xi\eta|\leq R^{-1}}}
 |\widehat F(\xi,\eta,\tau)|^2
 \,d\xi\,d\eta d\tau=
 \|\widehat F\|_2^2
 =
 \|F\|_2^2.
\end{aligned}
\]
Here the third equality follows from
\(\supp\widehat F\subset N_{R^{-1}}(H)\), and the last equality
follows from Plancherel's theorem.
Finally, we recall that $|T| \sim R^2$. Combining these estimates, we obtain
\[
 \int_{B_R}|F|^2w
 \lesssim_\eps
 R^\eps
 \max_{\substack{
       \theta\in\Theta(R^{-1})\\
       F_\theta\neq0}}
 \sup_{T\parallel T_\theta}
 \left(
    \frac1{|T|}\int_Tw^2
 \right)^{1/2}
 \|F\|_2^2.
\]
The passage from this local estimate to the estimate on
\(\mathbb R^3\) is the same as at the end of the proof of
\Cref{prop:bilinear-separated}. This proves
\eqref{eqn:equivL2}.
\end{proof}

\subsection{Proof of \Cref{thm:MT0}}\label{sec:proofMT0}
As noted earlier, \Cref{thm:MT0} is essentially a consequence of \eqref{main:eq2thm}. We give a proof for completeness. 
Given \(g\in L^2([-1,1]^2)\), define $F$ as in the proof of \Cref{prop:extension-equivalence}. Since $\widehat{F}$ has compact support, we may write $F=F*\check{\chi}$ for some $\chi\in C_c^\infty(\mathbb R^3)$. 
Let \(\phi(x):=(1+|x|)^{-100}\), so that $|\check{\chi}|\les \phi$. By Cauchy--Schwarz,
\[
    |F|^2  \les |F|^2* \phi.
\]
Define
\[
    \widetilde w:=w*\phi.
\]
Using Fubini's theorem, we obtain
\begin{equation}\label{eq:extension-weight-smoothing}
    \int_{B_R}|Eg|^2w
    \lesssim
    \int_{\mathbb R^3}|F|^2\widetilde w.
\end{equation}

Thus, applying \Cref{thm:weightedL2} as in the proof of \Cref{prop:extension-equivalence}, we get 
\[
    \int_{B_R}|Eg|^2w
    \lesssim_\eps
    R^\eps
    \max_{\substack{
        \theta\in\Theta(R^{-1})\\
        g_\theta\neq0}}
    \sup_{T\parallel T_\theta}
    \left(
        \int_T \tilde{w}^2
    \right)^{1/2}
    \|g\|_2^2.
\]
By \Cref{prop:lPerp},
\[
\begin{split}
 \max_{\substack{
       \theta\in\Theta(R^{-1})\\
       g_\theta \neq0}}
 \sup_{T\parallel T_\theta}
 \left(
    \int_T\widetilde w^2
 \right)^{1/2}\lesssim
 R^{1/2}
 \sup_{\ell\perp\supp g}
 \left(
    \int_{N_1(\ell)}\widetilde w^2
 \right)^{1/2}.
\end{split}
\]
Since \(\widetilde w\) is roughly constant at scale \(1\), in the sense that $\widetilde w(x)\sim \widetilde w(y)$ whenever $|x-y|\leq 1$, 
\[
    \left(
       \int_{N_1(\ell)}\widetilde w^2
    \right)^{1/2}
    \leq
   \| \widetilde{w} \|_{L^\infty(N_1(\ell))}^{1/2} \left(
       \int_{N_1(\ell)}\widetilde w
    \right)^{1/2} \les \int_{N_1(\ell)}\widetilde w.
\]
Moreover, we have
\[ \int_{N_1(\ell)}\widetilde w = \int \int_{N_1(\ell)}  w(x-y) \phi(y) dx dy \les \sup_{\ell\perp \supp g} Xw(\ell). \] 

Combining the preceding estimates yields
\[
    \int_{B_R}|Eg|^2w 
    \lesssim_\eps
    R^{1/2+\eps}
    \sup_{\ell\perp\supp g}Xw(\ell)
    \|g\|_2^2,
\]
which, in particular, implies \Cref{thm:MT0}.

\bibliographystyle{plain}

\bibliography{references}

\end{document}